\newcommand{\si}{\mathbf{S}}
\newtheoremstyle{slanted}
{3pt}
{3pt}
{\slshape}
{}
{\bfseries}
{.}
{.5em}
{}
\theoremstyle{slanted}
\newtheorem{thm}{Theorem}[section]
\newtheorem{defn}[thm]{Definition}
\newtheorem{lem}[thm]{Lemma}
\newtheorem{prop}[thm]{Proposition}
\newtheorem{cor}[thm]{Corollary}
\theoremstyle{remark}
\newtheorem{rem}[thm]{Remark}
\begin{document} 

\title[A New Approach to Enumerating Statistics Modulo $n$]{A New Approach to Enumerating Statistics Modulo $n$}
\author{William Kuszmaul}
\maketitle
\begin{abstract}
We find a new approach to computing the remainder of a polynomial modulo $x^n-1$; such a computation is called modular enumeration. Given a polynomial with coefficients from a commutative $\mathbb{Q}$-algebra, our first main result constructs the remainder simply from the coefficients of residues of the polynomial modulo $\Phi_d(x)$ for each $d\mid n$. Since such residues can often be found to have nice values, this simplifies a number of modular enumeration problems; indeed in some cases, such residues are already known while the related modular enumeration problem has remained unsolved. We list six such cases which our technique makes easy to solve. Our second main result is a formula for the unique polynomial $a$ such that $a \equiv f \mod \Phi_n(x)$ and $a\equiv 0 \mod x^d-1$ for each proper divisor $d$ of $n$.

We find a formula for remainders of $q$-multinomial coefficients and for remainders of $q$-Catalan numbers modulo $q^n-1$, reducing each problem to a finite number of cases for any fixed $n$. In the prior case, we solve an open problem posed by Hartke and Radcliffe. In considering $q$-Catalan numbers modulo $q^n-1$, we discover a cyclic group operation on certain lattice paths which behaves predictably with regard to major index. We also make progress on a problem in modular enumeration on subset sums posed by Kitchloo and Pachter.
\end{abstract}

\section{Introduction}

In this paper, we provide a novel approach to modular enumeration, allowing us to solve problems that were previously unapproachable. Modular enumerations appear in widely ranging fields of mathematics, from representation theory to single-error correcting codes. Interesting applications appear, for example, in  \cite{Reiner04,Kitchloo93,Stanley72,Hanlon98,Desarmenien90,Od78,Li12,Cohen55,Nicol54,Brunat12,Desarmenien94}. 

Let $f$ be a polynomial with coefficients from a commutative $\mathbb{Q}$-algebra. Given such an $f$, modular enumeration is defined as finding the remainder $\si_n(f)$ of $f$ modulo $x^n-1$ for a positive integer $n$. (We call $\si_n(f)$ the $n$-simplification of $f$.) Equivalently, $$\si_n(f)=\sum_{0\le i<n}\ \ \sum_{j\equiv i \mod n}[x^j]f\cdot x^i.$$

Let $\Phi_d(x)$ be the $d$-th cyclotomic polynomial, and choose a polynomial $m_d$ satisfying $m_d \equiv f \mod \Phi_d(x)$ for each $d\mid n$. (In various practical cases, these $m_d$ can be chosen to be much simpler than $f$.) Our main result, Theorem \ref{thmmodcount}, constructs the coefficients of $\si_n(f)$ in terms of the coefficients of $m_d$ for $d\mid n$ and in terms of Ramanujan sums (which have a short closed-form expression). For some generating functions $f$, results about $m_d$ are already known, while results about the $n$-simplification often are not. Examples include $q$-multinomial coefficients \cite{Sagan92}, \textbf{INV}$\mathbf{_n}$ of alternating permutations \cite{Desarmenien83}, $(q, t)$-Eulerian polynomials \cite{Desarmenien92}, modified Hall-Littlewood polynomials \cite{Lascoux94}, $(q,t)$-Kostka polynomials \cite{Descouens06}, Desarmenien's $C_{m,n}(q)$ \cite{Desarmenien83}, and Desarmenien's $\mathbf{T_{a}}$ \cite{Desarmenien83}; in each of these cases, Theorem \ref{thmmodcount} can be used to effortlessly obtain new  modular enumeration results.

That such an approach to modular enumeration exists should not be entirely surprising, given that $\prod\limits_{d\mid n} \Phi_d(x)=x^n-1$ and that all $\Phi_d(x)$ are pairwise coprime. But the Chinese Remainder Theorem, while postulating the existence of a construction of $\si_n(f)$ from given remainders modulo $x^n-1$, does not yield any manageable formulas for its coefficients. In order to derive such a formula, we introduce an invariant modulo $\Phi_n(x)$ which is interesting in its own right. After providing background in Section \ref{Background}, in Section \ref{Secmain}, we prove Theorem \ref{thmmodcount} using this invariant. This leads us to our second main result, a formula using the coefficients of $f$ which outputs the unique polynomial $a\equiv f \mod \Phi_n(x)$ such that $a \equiv 0 \mod x^d-1$ for each proper divisor $d$ of $n$ (Theorem \ref{thmrepelement}).

In Section \ref{Secap}, we show two example applications of Theorem \ref{thmmodcount}. In Subsection \ref{Subbin}, we find a formula for the $n$-simplification of the $q$-multinomial coefficient ${ j \brack k_1, k_2, \ldots, k_l }$ which reduces the problem to the cases where $j<n$, resolving an open problem posed in \cite{Hartke13}. Furthermore, in the case of $n|j$, our formula can be reduced to be non-recursive. This is a significant improvement on a recursive formula found in \cite{Hartke13} for the same case. Given that the coefficients of ${ j \brack k_1,k_2,\ldots, k_l}$ have no simple closed-form expression, it seems unlikely that the remaining cases, where $j<n$, can have one either. In Subsection \ref{Subsum}, we make progress on an open problem on subset sums modulo $n$ previously posed in \cite{Kitchloo93} and solved in some cases in \cite{Stanley72,Kitchloo93,Li12}. 

The $q$-Catalan number\footnote{Also known as MacMahon's maj-statistic $q$-Catalan number.} $C_j(q)$ is a natural $q$-analogue for the Catalan numbers. Perhaps the most intuitive definition of the polynomial $C_j(q)$ is as the generating function for major index of Dyck words containing precisely $j$ zeros and $j$ ones \cite{Furlinger85}. In Section \ref{Seccat}, we find a formula for the $n$-simplification of $C_j(q)$, reducing the problem to cases where $j<n$. Once again, it seems unlikely that the remaining cases have a manageable formula, given that the coefficients of $C_j(q)$ do not and that $\si_n(C_j(q))=C_j(q)$ for $n$ sufficiently large relative to $j$. We provide two proofs of our formula. The first uses a previously undiscovered group operation on certain lattice paths which allows for us to find simple values of $m_d$ and then obtain $\si_n(C_j(q))$ with Theorem \ref{thmmodcount}. This group action is interesting in its own right, cyclically shifting major index modulo $n$. 

Finally, we conclude with discussion and open questions in Section \ref{Conclusion}.

\section{Background and Conventions}\label{Background}

For the entirety of this paper, we use $\mathbb{K}$ to denote a commutative $\mathbb{Q}$-algebra. When $a$, $b$ and $c$ are elements of a commutative ring $R$, we will use $a\equiv b\mod c$  as a shorthand for $a\equiv b \mod cR$, at least when $R$ can be uniquely inferred from the context.

\begin{defn}
An \emph{$n$-simplified polynomial} is a polynomial of degree less than $n$. The \\\emph{$n$-simplification} $\si_n(f)$ of a polynomial $f \in \mathbb{K}[x]$ is the $n$-simplified polynomial $h \in \mathbb{K}[x]$ satisfying $h \equiv f \mod x^n-1$ (that is, the remainder of $f$ modulo $x^n-1$).
\end{defn}

We follow the convention that whenever $f$ is a polynomial, $[x^i]f$ is the coefficient of $x^i$ in $f$.

\begin{defn}
Given any $f \in \mathbb{K}[x]$, $i \in \mathbb{Z}$, and $n\in \mathbb{N}$, we may refer to $\sum\limits_{j \equiv i \mod n}[x^j]f$ as $\si^i_n(f)$.
\end{defn}

Given $i \in \mathbb{Z}$ with $0\le i<n$ and $f\in \mathbb{K}[x]$, we have $\si^i_n(f)=[x^i]{\si_n(f)}$.

\begin{defn}
The \emph{cyclotomic polynomials} $(\Phi_n(x))_{n\geq 1}$ are a sequence of polynomials in $\mathbb{Z}[x]$ defined recursively by the equality
\[
\prod_{d\mid n} \Phi_d(x) = x^n-1\qquad \text{for all }n\geq 1.
\]
\end{defn}

While this is not immediately clear, these polynomials are actually well-defined. The polynomial $\Phi_n(x)$ is called the \emph{$n$-th cyclotomic polynomial}, and its roots (in the algebraic closure of $\mathbb{Q}$) are the \emph{primitive $n$-th roots of unity}. One can show that $\Phi_n(x)$ is monic for every $n$. Moreover, $\Phi_n(x)$ is known to be irreducible in $\mathbb{Q}[x]$ (a fact we will not actually end up using).

As a convention, we use $(j,k)$ for $j,k \in \mathbb{Z}$ to denote the GCD of $j$ and $k$. Furthermore, $\phi(n)$ is Euler's totient function of $n$.

\begin{defn}
For any positive integer $n$, we use $W_n$ to denote the set of $n$-th primitive roots of unity in a fixed algebraic closure of $\mathbb{Q}$. Note that $|W_n|=\phi(n)$.
\end{defn}

The following simple fact will be applied in the derivation of our results:

\begin{lem}\label{lempolfrac}
Let $d$ and $j$ be two divisors of a positive integer $n$.
\begin{itemize}
\item If $d \nmid j$, then $\frac{x^n-1}{x^j-1} \equiv 0 \mod \Phi_d(x)$ in $\mathbb{Z}[x]$. 
\item If $d \mid j$, then $\frac{x^n-1}{x^j-1} \equiv \frac{n}{j} \mod \Phi_d(x)$ in $\mathbb{Z}[x]$.
\end{itemize}
\end{lem}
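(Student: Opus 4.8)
The plan is to avoid roots of unity and instead work directly with the defining product $x^n-1=\prod_{e\mid n}\Phi_e(x)$. Since $j\mid n$, every divisor of $j$ is also a divisor of $n$, so dividing the factorization of $x^n-1$ by that of $x^j-1$ yields
\[
\frac{x^n-1}{x^j-1}=\prod_{\substack{e\mid n\\ e\nmid j}}\Phi_e(x)
\]
as an identity in $\mathbb{Z}[x]$; in particular this confirms that $\frac{x^n-1}{x^j-1}$ is an honest polynomial with integer coefficients. I would prove this identity first, since both bullet points drop out of it almost immediately.

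For the first bullet ($d\nmid j$): by hypothesis $d\mid n$ and $d\nmid j$, so $\Phi_d(x)$ is literally one of the factors on the right-hand side of the displayed identity. Hence $\Phi_d(x)$ divides $\frac{x^n-1}{x^j-1}$ in $\mathbb{Z}[x]$, which is precisely the statement $\frac{x^n-1}{x^j-1}\equiv 0\mod\Phi_d(x)$.

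For the second bullet ($d\mid j$): now $\Phi_d(x)$ appears among the factors of $x^j-1=\prod_{e\mid j}\Phi_e(x)$, so $\Phi_d(x)\mid x^j-1$ and therefore $x^j\equiv 1\mod\Phi_d(x)$. Writing the quotient as the geometric sum $\frac{x^n-1}{x^j-1}=\sum_{i=0}^{n/j-1}x^{ij}$, I would reduce each summand $x^{ij}\equiv 1$ modulo $\Phi_d(x)$, so the whole sum is congruent to $n/j$. Because $j\mid n$, the quantity $n/j$ is an integer, so the congruence takes place in $\mathbb{Z}[x]$ as claimed.

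I do not expect a real obstacle here; the argument is essentially a rearrangement of the two factorizations, and the only thing to watch is bookkeeping about the ambient ring — checking that the quotient of monic integer polynomials is again a monic integer polynomial, and that ``$\equiv\mod\Phi_d(x)$'' is consistently interpreted in $\mathbb{Z}[x]$ rather than $\mathbb{Q}[x]$ or the algebraic closure. One could alternatively prove both bullets by evaluating $\frac{x^n-1}{x^j-1}$ at each primitive $d$-th root of unity (using that $\zeta^n=1$ always, while $\zeta^j=1$ exactly when $d\mid j$) and invoking that $\Phi_d(x)=\prod_{\zeta\in W_d}(x-\zeta)$ has distinct roots; but that route requires descending from $\overline{\mathbb{Q}}[x]$ back to $\mathbb{Z}[x]$, an extra step the product-formula approach sidesteps.
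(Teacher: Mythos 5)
Your proof is correct, but it follows a genuinely different route from the paper's. The paper proves both bullets at once by evaluating the geometric sum $1+x^j+\cdots+x^{n-j}$ at each $w\in W_d$ (getting $0$ when $d\nmid j$ and $n/j$ when $d\mid j$), concluding that the congruences hold in $\mathbb{Q}[x]$ because the roots of $\Phi_d$ are exactly $W_d$, and then descending to $\mathbb{Z}[x]$ via Gauss's lemma using that $\Phi_d$ is monic --- precisely the alternative you mention and set aside at the end. You instead stay inside $\mathbb{Z}[x]$ throughout, leaning only on the recursive definition $\prod_{e\mid n}\Phi_e(x)=x^n-1$: when $d\nmid j$ the factor $\Phi_d(x)$ appears explicitly in $\frac{x^n-1}{x^j-1}=\prod_{e\mid n,\ e\nmid j}\Phi_e(x)$, and when $d\mid j$ the divisibility $\Phi_d(x)\mid x^j-1$ gives $x^j\equiv 1$, collapsing the geometric sum to $n/j$. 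What your version buys is elementarity and self-containment: no passage through $\overline{\mathbb{Q}}$, no appeal to $\Phi_d$ being squarefree (which the paper uses implicitly when inferring divisibility in $\mathbb{Q}[x]$ from vanishing at all roots), and no Gauss's lemma, which also harmonizes with the paper's remark that irreducibility of $\Phi_n$ is never needed. What the paper's version buys is brevity and consistency of technique: the same root-of-unity evaluation reappears in its treatment of Ramanujan sums (Lemma 2.7) and elsewhere, so the one-line computation over $W_d$ fits the surrounding toolkit. Both arguments are sound; yours is arguably the cleaner proof of this particular lemma.
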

\begin{proof}
Note that given $w \in W_d$, $\frac{w^n-1}{w^j-1}=1+w^j+\cdots+w^{n-j}$ is $0$ when $d \nmid j$ and is $\frac{n}{j}$ when $d \mid j$. Since the roots of $\Phi_d(x)$ are $W_d$ (in the algebraic closure of $\mathbb{Q}$), both congruences claimed in this lemma hold in $\mathbb{Q}[x]$. Therefore, noting that $\Phi_d$ is monic, both claims hold by Gauss's lemma.
\end{proof}


Although we will not use this, it has also been rather nicely shown in \cite{Bruijn53} that the ideal in $\mathbb{Z}[x]$ generated by $\Phi_n(x)$ is also generated by $\frac{x^n-1}{x^{n/p}-1}$ for all prime $p\mid n$.

\begin{defn}
For any integer $l$ and positive integer $n$, the \emph{Ramanujan sum} for $n$ and $l$ is defined as the integer $c_n(l) = \sum\limits_{d \mid n,l} \mu(\dfrac{n}{d}) d$. By abuse of notation, we will often regard this sum as an element of $\mathbb{K}$.
\end{defn}

The following characterization of Ramanujan sums is often used as an alternative definition: 

\begin{lem}\label{lemramanujanroots}
Every integer $l$ and positive integer $d$ satisfy $c_d(l) = \sum\limits_{w\in W_d}w^l$.
\end{lem}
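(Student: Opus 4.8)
The plan is to show that the two expressions $c_d(l) = \sum_{e \mid (d,l)} \mu(d/e)\, e$ and $\sum_{w \in W_d} w^l$ agree by recognizing the right-hand side as a Möbius inversion of a transparent identity involving \emph{all} $d$-th roots of unity (not just primitive ones). First I would fix $d$ and $l$ and define $g(e) = \sum_{w \in W_e} w^l$ for each $e \mid d$, so that the target is to compute $g(d)$. Summing $g$ over divisors collapses a partition of the full root set: since every $d$-th root of unity is a primitive $e$-th root of unity for exactly one $e \mid d$, we get $\sum_{e \mid d} g(e) = \sum_{\zeta^d = 1} \zeta^l$. The latter sum is the standard one: it equals $d$ if $d \mid l$ and $0$ otherwise (write $\zeta$ running over $\{\omega^k : 0 \le k < d\}$ for a fixed primitive root $\omega$, and sum the geometric series $\sum_{k} \omega^{kl}$, which is $d$ when $\omega^l = 1$, i.e. $d \mid l$, and is $\frac{\omega^{dl}-1}{\omega^l - 1} = 0$ otherwise).

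Next I would apply Möbius inversion over the divisor lattice of $d$. Writing $F(d) = \sum_{e \mid d} g(e)$, we have just shown $F(e) = e \cdot [\,e \mid l\,]$ for every $e \mid d$, so Möbius inversion gives
\[
g(d) = \sum_{e \mid d} \mu(d/e)\, F(e) = \sum_{e \mid d} \mu(d/e)\, e \cdot [\,e \mid l\,] = \sum_{\substack{e \mid d \\ e \mid l}} \mu(d/e)\, e = \sum_{e \mid (d,l)} \mu(d/e)\, e,
\]
which is exactly $c_d(l)$ by definition. This completes the argument.

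The computation is essentially routine, so there is no serious obstacle; the only point requiring a modicum of care is justifying the partition step, namely that $\{\zeta : \zeta^d = 1\}$ decomposes as the disjoint union $\bigsqcup_{e \mid d} W_e$ — this is just the statement that the multiplicative order of a $d$-th root of unity is a well-defined divisor of $d$, together with the fact that an element of order $e$ is by definition a primitive $e$-th root of unity. One should also note that all these identities are between algebraic numbers (or, after embedding, complex numbers), so there is no subtlety about the ring in which they hold; the final "abuse of notation" regarding $c_d(l) \in \mathbb{K}$ is immaterial here since the lemma is a statement about the integer $c_d(l)$ and roots of unity in the algebraic closure of $\mathbb{Q}$.
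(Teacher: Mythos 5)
Your proof is correct and is essentially identical to the paper's: both define $g(e)=\sum_{w\in W_e}w^l$, use the partition of all $d$-th roots of unity into primitive $e$-th roots for $e\mid d$ to evaluate $\sum_{e\mid d}g(e)$, and then recover $c_d(l)$ by M\"obius inversion. No gaps.
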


\begin{proof}
For every integer $l$ and positive integer $d$, let $c'_d(l) = \sum\limits_{w\in W_d}w^l$. We need to prove that $c'_d(l) = c_d(l)$.
Since the $n$-th roots of unity are precisely the $d$-th primitive roots of unity for $d$ ranging over the divisors of $n$, we have $$\sum\limits_{d\mid n} c'_d(l)=\sum\limits_{w^n=1}w^l=\begin{cases}n \text{ if } n\mid l \\ 0 \text{ if } n\nmid l \end{cases}$$ for every positive integer $n$.
By M\"{o}bius inversion, this implies $c'_n(l)=\sum\limits_{d\mid n,l}\mu(\frac{n}{d})d = c_n(l).$
\end{proof}

Note that $c_n(1)=\mu(n)$ and $c_n(0)=\phi(n)$. More generally, the following closed-form expression for all Ramanujan sums is due to H\"older:
$$c_n(l)=\sum_{w \in W_n}w^l=\frac{\mu(\frac{n}{(n,l)})\phi(n)}{\phi(\frac{n}{(n,l)})}.$$
(This can easily be derived from the observations that $w^l \in W_{n/(n,l)}$ for $w \in W_n$, and that $w^l$ is uniformly distributed on $W_{n/(n,l)}$ as $w$ ranges over $W_n$.)

\begin{defn}
Let $S$ be a set and $\equiv$ be an equivalence relation on $S$. A function $f$ on $S$ is an \emph{invariant} (on $S$ with respect to $\equiv$) if for $w,w'\in S$, we have $w\equiv w' \implies f(w)=f(w')$. If, in addition, $f(w)=f(w')\implies w\equiv w'$, then $f$ is a \emph{complete invariant}  (on $S$ with respect to $\equiv$). If the equivalence is congruence modulo some ideal $I$, then we will refer to invariants (resp. complete invariants) with respect to this equivalence as ``invariants (resp. complete invariants) modulo $I$''.
\end{defn}

\begin{defn}
Let $n$ and $d$ be positive integers. A polynomial $a \in \mathbb{K}[x]$ is \emph{periodic on $n$} (with \emph{period $d$}) if $d \not\equiv 0 \mod n$ and $\si^i_n(a)={\si^{i+d}_n(a)}$ for all $i$.
\end{defn}
For example, $1+2x+x^2+2x^3+x^4+2x^5+x^6+2x^7$ is periodic on $8$ with periods $2$ and $4$. It is also periodic on $4$ with period $2$. But it is not periodic on $2$.

\begin{lem}\label{lemperiodzero}
Let $a \in \mathbb{Z}[x]$ be periodic on $n$. Then $a \equiv  0 \mod \Phi_n(x)$.
\end{lem}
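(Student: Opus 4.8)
The plan is to evaluate $a$ at an arbitrary primitive $n$-th root of unity $w \in W_n$ and show the result is zero; since the roots of $\Phi_n(x)$ in the algebraic closure of $\mathbb{Q}$ are exactly the elements of $W_n$, and $\Phi_n$ is monic with integer coefficients, Gauss's lemma then upgrades the resulting congruence from $\mathbb{Q}[x]$ to $\mathbb{Z}[x]$ (exactly as in the proof of Lemma \ref{lempolfrac}). So the whole content is: if $a$ is periodic on $n$ with some period $d$ (where $d \not\equiv 0 \bmod n$), then $a(w) = 0$ for every $w \in W_n$.

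To see this, first I would reduce to a cleaner period. If $a$ is periodic on $n$ with period $d$, then grouping the coefficients of $a$ by residue class mod $n$, the value $a(w)$ for $w \in W_n$ depends only on the sequence $\bigl(\si^0_n(a), \si^1_n(a), \ldots, \si^{n-1}_n(a)\bigr)$, namely $a(w) = \sum_{0 \le i < n} \si^i_n(a)\, w^i$. Periodicity with period $d$ says this length-$n$ sequence of coefficients (indices read mod $n$) is invariant under shifting by $d$; hence it is invariant under shifting by $e := (d,n)$, and $e$ is a proper divisor of $n$ since $n \nmid d$. Thus $\si^i_n(a) = \si^{i+e}_n(a)$ for all $i$, and the coefficient sequence is determined by its values on $0, 1, \ldots, e-1$, repeating $n/e$ times around the cycle.

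Now compute: for $w \in W_n$,
\[
a(w) \;=\; \sum_{i=0}^{n-1} \si^i_n(a)\, w^i \;=\; \sum_{r=0}^{e-1} \si^r_n(a) \sum_{t=0}^{n/e - 1} w^{r + te} \;=\; \sum_{r=0}^{e-1} \si^r_n(a)\, w^r \sum_{t=0}^{n/e-1} (w^e)^t .
\]
Since $w$ is a primitive $n$-th root of unity and $e < n$ is a proper divisor, $w^e$ is a primitive $(n/e)$-th root of unity, in particular $w^e \ne 1$, so the inner geometric sum $\sum_{t=0}^{n/e-1}(w^e)^t$ vanishes. Hence $a(w) = 0$ for every $w \in W_n$, so $\Phi_n(x) \mid a(x)$ in $\mathbb{Q}[x]$, and then in $\mathbb{Z}[x]$ by Gauss's lemma since $\Phi_n$ is monic.

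I do not expect a serious obstacle here; the only point requiring a little care is the reduction from an arbitrary period $d$ to the divisor $e=(d,n)$ — one must note that shifts of a cyclic sequence by $d$ generate the same group of shifts as shifts by $(d,n)$, and that $(d,n)$ remains a \emph{proper} divisor of $n$ precisely because $d \not\equiv 0 \bmod n$. Everything else is the standard "sum of a nontrivial character over a cyclic group is zero" computation together with the Gauss's lemma lift already used in the paper.
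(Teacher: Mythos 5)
Your proof is correct. It differs in packaging from the paper's argument: the paper first replaces $a$ by its $n$-simplification, then asserts that an $n$-simplified polynomial periodic on $n$ must factor as $r\cdot\frac{x^n-1}{x^{d'}-1}$ for some $r\in\mathbb{Z}[x]$ and some \emph{proper divisor} $d'$ of $n$, and finally invokes Lemma \ref{lempolfrac} (whose own proof is the root-of-unity evaluation plus Gauss's lemma); you instead evaluate $a$ directly at each $w\in W_n$ and run the geometric-sum computation yourself. The core cancellation $1+w^{e}+\cdots+w^{n-e}=0$ for a proper divisor $e\mid n$ is the same in both arguments, so the mathematical content is close, but your version has the merit of making explicit the reduction from an arbitrary period $d$ (with $n\nmid d$) to the proper divisor $e=(d,n)$ --- a step the paper leaves implicit inside its claim about the form $r\cdot\frac{x^n-1}{x^{d'}-1}$. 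What the paper's factorization route buys in exchange is that it is purely formal once Lemma \ref{lempolfrac} is known as a divisibility in $\mathbb{Z}[x]$, which is why (as the paper remarks) the lemma transfers verbatim to $R[x]$ for an arbitrary commutative ring $R$, whereas your evaluation argument is tied to working over $\mathbb{Q}$ (or would need an extra transfer step to cover general coefficient rings). For the statement as given, over $\mathbb{Z}[x]$, your proof is complete.
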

\begin{proof}
We can assume $a$ is $n$-simplified because $x^n-1 \equiv 0 \mod \Phi_n(x)$ and because the \\$n$-simplification of a polynomial periodic on $n$ is still periodic on $n$. For an $n$-simplified polynomial to be periodic on $n$, it must be $r\frac{x^n-1}{x^d-1}$ for some $r \in \mathbb{Z}[x]$ and some proper divisor $d$ of $n$. If follows from Lemma \ref{lempolfrac} that $a \equiv 0 \mod \Phi_n(x)$.
\end{proof}

Although we will not be using this, it is straightforward to see that Lemma \ref{lemperiodzero} actually holds in $R[x]$ for any ring $R$. 

\section{An invariant modulo $\Phi_n(x)$} \label{Secmain}

In this section, we find a previously unknown invariant modulo $\Phi_n(x)$ which leads us to our two main results, Theorem \ref{thmmodcount} and Theorem \ref{thmrepelement}. In addition, we find a complete invariant on $\mathbb{Z}[x]$ modulo $\Phi_n(x)$. Along the way, we obtain a construction that, given a polynomial $m_d \in \mathbb{K}[x]$ for each $d\mid n$, finds the unique $n$-simplified polynomial in $\mathbb{K}[x]$ which is congruent to $m_d \mod \Phi_d(x)$ for each $d\mid n$.

\begin{defn}
Given $a \in \mathbb{K}[x]$,  $i \in \mathbb{Z}$, and a positive integer $n$, let $G^n_i(a)=\sum\limits_{s\ge 0}[x^s]a\cdot c_n(i-s)$.
\end{defn}

\begin{defn}
Given $a \in \mathbb{K}[x]$ and a positive integer $n$, we define $G^n(a)$ as $\frac{1}{n}\sum\limits_{0\le i <n}G^n_i(a)x^i$.
\end{defn}

\begin{lem}\label{thminvariant}
Let $n$ be a positive integer. Let $a,b \in \mathbb{K}[x]$ be such that $a \equiv b \mod \Phi_n(x)$. Then $G^n_i(a)=G^n_i(b)$ for every $i \in \mathbb{Z}$.
\end{lem}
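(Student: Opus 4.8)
The plan is to exploit the $\mathbb{K}$-linearity of $G^n_i$ in its polynomial argument together with the root-of-unity description of Ramanujan sums from Lemma~\ref{lemramanujanroots}. Since $c_n(i-s)$ is a fixed integer not depending on the argument, the map $a\mapsto G^n_i(a)$ is $\mathbb{K}$-linear, so it suffices to show that $G^n_i(c)=0$ whenever $c\equiv 0\bmod\Phi_n(x)$; applying this to $c=a-b$ then gives $G^n_i(a)=G^n_i(b)$. Writing such a $c$ as $c=\Phi_n(x)h(x)$ with $h=\sum_k h_k x^k\in\mathbb{K}[x]$ (a finite sum, $h_k\in\mathbb{K}$), linearity reduces the task further: since $[x^s]\big(\Phi_n(x)h(x)\big)=\sum_k h_k\,[x^s]\big(\Phi_n(x)x^k\big)$, one gets $G^n_i(\Phi_n h)=\sum_k h_k\,G^n_i(\Phi_n(x)x^k)$, so it is enough to prove $G^n_i(\Phi_n(x)x^k)=0$ for every integer $k\ge 0$ and every $i$.

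For this last step I would write $\Phi_n(x)=\sum_t\phi_t x^t$ with $\phi_t\in\mathbb{Z}$, so that $[x^s]\big(\Phi_n(x)x^k\big)=\phi_{s-k}$, and compute
\begin{align*}
G^n_i(\Phi_n(x)x^k)=\sum_{s\ge 0}\phi_{s-k}\,c_n(i-s)=\sum_t\phi_t\,c_n(i-k-t).
\end{align*}
Substituting $c_n(l)=\sum_{w\in W_n}w^l$ from Lemma~\ref{lemramanujanroots} and interchanging the two finite sums gives
\begin{align*}
G^n_i(\Phi_n(x)x^k)=\sum_{w\in W_n}w^{i-k}\sum_t\phi_t\,w^{-t}=\sum_{w\in W_n}w^{i-k}\,\Phi_n(w^{-1}).
\end{align*}
The key observation is that if $w$ is a primitive $n$-th root of unity, then so is $w^{-1}$, since it has the same multiplicative order $n$; hence $\Phi_n(w^{-1})=0$ for every $w\in W_n$, every term of the last sum vanishes, and $G^n_i(\Phi_n(x)x^k)=0$. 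This is an identity of integers, so it remains valid after passing to $\mathbb{K}$, and the reduction above completes the proof.

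I do not anticipate a serious obstacle; the only mild point requiring care is bookkeeping about the ground ring, and the reduction to monomials $\Phi_n(x)x^k$ is designed precisely so that we only ever evaluate the integer polynomial $\Phi_n$ at roots of unity (working in $\overline{\mathbb{Q}}$), keeping the $\mathbb{K}$-valued coefficients $h_k$ outside the evaluation. An essentially equivalent but conceptually cleaner route, at the cost of introducing the ring $\mathbb{K}\otimes_{\mathbb{Q}}\overline{\mathbb{Q}}$ in which to evaluate elements of $\mathbb{K}[x]$ at roots of unity, is to note directly from Lemma~\ref{lemramanujanroots} that $G^n_i(a)=\sum_{w\in W_n}w^i\,a(w^{-1})$, and that $a(w^{-1})$ depends only on the residue of $a$ modulo $\Phi_n(x)$ because each $w^{-1}$ is a root of $\Phi_n$; this viewpoint also makes transparent why $G^n_i$ is the ``invariant modulo $\Phi_n(x)$'' advertised in the section title.
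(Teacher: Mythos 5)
Your proof is correct and follows essentially the same route as the paper: reduce by $\mathbb{K}$-linearity to showing $G^n_i$ vanishes on multiples of $\Phi_n(x)$ (the paper handles $x^t\Phi_n(x)$ via the shift identity $G^n_i(x^t c)=G^n_{i-t}(c)$, you handle $\Phi_n(x)x^k$ by direct expansion, which is the same computation), then expand $c_n(i-s)$ via Lemma~\ref{lemramanujanroots} and conclude from $\Phi_n(w^{-1})=0$ for $w\in W_n$. No issues; your care about keeping the $\mathbb{K}$-coefficients outside the root-of-unity evaluation is a nice touch but matches the paper's implicit argument.
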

\begin{proof}
Since $G^n_i: \mathbb{K}[x] \rightarrow \mathbb{K}$ is a linear map, we need only show that $G^n_i(x^t\Phi_n(x))=0$ for every $i \in \mathbb{Z}$ and non-negative $t\in \mathbb{Z}$. By virtue of the easy-to-check identity $G^n_i(x^t c) = G^n_{i-t}(c)$ for every $c \in \mathbb{K}[x]$, we can reduce this to the case $t=0$. In other words, we only have to check that $G^n_i(\Phi_n(x))=0$ for every $i \in \mathbb{Z}$. Expanding $G^n_i(\Phi_n(x))$ and $c_n(i-s)$ (the latter by way of Lemma \ref{lemramanujanroots}) gives $$G^n_i(\Phi_n(x))=\sum\limits_{s\ge 0}[x^s]\Phi_n(x)\cdot c_n(i-s)=\sum\limits_{s\ge 0}[x^s]\Phi_n(x)\sum\limits_{w\in W_n}w^{i-s}.$$ 
Rearranging the expression on the right yields
$$\sum\limits_{w \in W_n}w^i\sum\limits_{s \ge 0}[x^s]\Phi_n(x)w^{-s}=\sum\limits_{w\in W_n}w^{i}\Phi_n(w^{-1}).$$
For each $w\in W_n$, we have $w^{-1}\in W_n$, implying $\Phi_n(w^{-1})=0$. Thus $G^n_i(\Phi_n(x))=0$.
\end{proof}

\begin{thm}\label{thmmodcount}
Let $a \in \mathbb{K}[x]$, let $n$ be a positive integer, and let $i\in \mathbb{Z}$. For each $d\mid n$, let $m_d \in \mathbb{K}[x]$ be such that $m_d \equiv a \mod \Phi_d(x)$. Then
$$\si^i_n(a)=\frac{1}{n}\sum_{d\mid n}{G^d_i(m_d)}.$$
\end{thm}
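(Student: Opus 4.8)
The plan is to combine Lemma \ref{thminvariant} with the Chinese Remainder structure of $x^n-1 = \prod_{d\mid n}\Phi_d(x)$ and the computation of $G^n$ on polynomials of a particularly simple form. First I would reduce to the case where $a$ is itself $n$-simplified: since $\si^i_n$ depends only on $a \bmod x^n-1$, and since each $m_d$ is also a valid choice of residue for $\si_n(a)$ modulo $\Phi_d(x)$ (because $x^n-1 \equiv 0 \bmod \Phi_d(x)$ for every $d\mid n$), replacing $a$ by $\si_n(a)$ changes nothing on either side. So assume $\deg a < n$.

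Next I would establish the key computational fact: for an $n$-simplified polynomial $a$, we have $G^n_i(a) = n\cdot \si^i_n(a) = n\cdot [x^i]a$ for $0\le i<n$, or more precisely that $G^n(a) = a$ when $\deg a < n$. Unwinding the definition, $G^n_i(a) = \sum_{s\ge 0}[x^s]a\cdot c_n(i-s) = \sum_{s}[x^s]a \sum_{w\in W_n} w^{i-s}$ by Lemma \ref{lemramanujanroots}, which rearranges to $\sum_{w\in W_n} w^i\, a(w^{-1})$. To interpret this I would instead run the argument with \emph{all} $n$-th roots of unity: if $\zeta$ ranges over all $n$-th roots of unity then $\sum_\zeta \zeta^i a(\zeta^{-1}) = \sum_s [x^s]a \sum_\zeta \zeta^{i-s} = n\sum_{s\equiv i} [x^s]a = n\,\si^i_n(a)$, the standard ``roots of unity filter.'' So the content is that replacing the sum over all $n$-th roots of unity by the sum over primitive ones, and then reading off coefficients, is exactly what dividing the problem among the $\Phi_d$'s accomplishes — which is where the divisor sum enters.

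Then the main argument: apply Lemma \ref{thminvariant} at each divisor $d\mid n$ to get $G^d_i(m_d) = G^d_i(a)$, since $m_d \equiv a \bmod \Phi_d(x)$. Hence $\frac1n\sum_{d\mid n} G^d_i(m_d) = \frac1n\sum_{d\mid n} G^d_i(a)$, and it remains to show $\sum_{d\mid n} G^d_i(a) = n\,\si^i_n(a)$ for the (now $n$-simplified) polynomial $a$. Expanding with Lemma \ref{lemramanujanroots}, $\sum_{d\mid n} G^d_i(a) = \sum_{d\mid n}\sum_{w\in W_d} w^i a(w^{-1}) = \sum_{\zeta^n=1}\zeta^i a(\zeta^{-1})$, using that every $n$-th root of unity is a primitive $d$-th root for exactly one $d\mid n$. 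By the roots-of-unity filter above this equals $n\sum_{s\equiv i\bmod n}[x^s]a = n\,\si^i_n(a)$ (the sum collapses to the single term $s=i$ once $\deg a<n$, but it is cleanest to state it for general $a$ before the reduction). Dividing by $n$ gives the claim.

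I expect the only real subtlety to be bookkeeping over which ring we are working in and making the reduction step airtight: the identities $G^d_i(x^t c) = G^d_{i-t}(c)$ and linearity of $G^d_i$ let us pass freely between $a$ and $\si_n(a)$, but one must check that $m_d$ remains a legitimate residue of $\si_n(a)$ modulo $\Phi_d(x)$ — this is immediate since $\Phi_d(x)\mid x^n-1$. The root-of-unity manipulations all take place after base-changing to $\overline{\mathbb Q}\otimes_{\mathbb Q}\mathbb K$ (equivalently, evaluating the $\mathbb K$-valued coefficient polynomial at roots of unity in $\overline{\mathbb Q}$), which is harmless since $\mathbb K$ is a $\mathbb Q$-algebra; no new difficulty arises there. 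Everything else is a direct computation.
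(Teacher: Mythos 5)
Your proposal is correct and follows essentially the same route as the paper: apply Lemma \ref{thminvariant} to replace each $m_d$ by $a$, then collapse $\sum_{d\mid n} c_d(i-s)$ (equivalently, the sum over all $n$-th roots of unity) to $n$ times the indicator of $n \mid i-s$, which is exactly the roots-of-unity filter computation in the paper's proof. The preliminary reduction to $n$-simplified $a$ and the digression on $G^n(a)=a$ are unnecessary (as you yourself note, the computation works for general $a$), but they do no harm.
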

\begin{proof}
Observe that $\frac{1}{n}\sum\limits_{d\mid n}{G^d_i(m_d)}=\frac{1}{n}\sum\limits_{d\mid n}{G^d_i(a)}$ (since respective addends of these sums are equal by Lemma \ref{thminvariant}). Expanding yields $$\frac{1}{n}\sum\limits_{d\mid n}\sum_{s\ge 0}[x^s]a\cdot c_d(i-s)=\frac{1}{n}\sum_{s\ge 0}[x^s]a\sum_{d\mid n}c_d(i-s).$$  From Lemma \ref{lemramanujanroots} and the observation that the $n$-th roots of unity are the primitive $d$-th roots of unity for $d$ ranging over the divisors of $n$, we note that
$$\sum\limits_{d\mid n}c_d(i-s)=\sum\limits_{w^n=1}w^{i-s}=\begin{cases}n \text{ if } n\mid i-s \\ 0 \text{ if } n\nmid i-s \end{cases}.$$ 
Plugging this in brings us to $\sum\limits_{s \equiv i \mod n}[x^s]a=\si^i_n(a)$.
\end{proof}

\begin{cor}\label{cormodcount}
$$\si^i_n(a)=\frac{1}{n} \sum\limits_{d|n} \sum\limits_{0 \leq s <d} \si^s_d(m_d) \cdot c_d(i-s).$$
\end{cor}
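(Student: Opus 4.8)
The plan is to derive this directly from Theorem \ref{thmmodcount}, which already tells us that $\si^i_n(a)=\frac{1}{n}\sum_{d\mid n}G^d_i(m_d)$. So all that remains is to rewrite each summand $G^d_i(m_d)=\sum_{s\ge 0}[x^s]m_d\cdot c_d(i-s)$ in the claimed form, and then substitute.

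The key observation I would use is that $c_d(l)$ depends only on the residue of $l$ modulo $d$: indeed, from the definition $c_d(l)=\sum_{e\mid d,\,l}\mu(d/e)e$ (or equivalently from H\"older's formula, or from Lemma \ref{lemramanujanroots}), the value $c_d(l)$ is a function of $(d,l)$ only, and $(d,l)=(d,l+d)$. Hence $c_d(i-s)=c_d(i-s')$ whenever $s\equiv s'\bmod d$. This lets me group the terms of $G^d_i(m_d)$ by the residue class of $s$ modulo $d$:
$$G^d_i(m_d)=\sum_{0\le s<d}\ \sum_{t\equiv s\bmod d}[x^t]m_d\cdot c_d(i-t)=\sum_{0\le s<d}c_d(i-s)\sum_{t\equiv s\bmod d}[x^t]m_d=\sum_{0\le s<d}\si^s_d(m_d)\cdot c_d(i-s),$$
using the definition of $\si^s_d(m_d)$ in the last step.

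Substituting this expression for $G^d_i(m_d)$ into the formula from Theorem \ref{thmmodcount} yields exactly
$$\si^i_n(a)=\frac{1}{n}\sum_{d\mid n}\sum_{0\le s<d}\si^s_d(m_d)\cdot c_d(i-s),$$
which is the claim. There is essentially no obstacle here: the only point requiring any care is the periodicity of $c_d$ modulo $d$ (and the attendant interchange of finite sums), and once that is in hand the corollary is a purely formal consequence of the theorem. I would present it as a short two-line computation.
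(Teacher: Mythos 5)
Your proof is correct, but it reaches the corollary by a slightly different mechanism than the paper does. You keep $m_d$ as is, expand $G^d_i(m_d)$, and fold the sum into residue classes using the fact that $c_d(l)$ depends only on $l \bmod d$ (immediate from Lemma \ref{lemramanujanroots}, since $w^d=1$ for $w\in W_d$, or from the divisor-sum definition as you note). The paper instead observes that $m_d \equiv \si_d(m_d) \bmod \Phi_d(x)$, applies Theorem \ref{thmmodcount} with $\si_d(m_d)$ in place of $m_d$, and then the sum defining $G^d_i(\si_d(m_d))$ is automatically supported on $0\le s<d$, where $[x^s]\si_d(m_d)=\si^s_d(m_d)$. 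The two routes are equally short; what yours buys is the standalone identity $G^d_i(f)=\sum_{0\le s<d}\si^s_d(f)\,c_d(i-s)$ for every $f\in\mathbb{K}[x]$, i.e., that $G^d_i$ depends only on the $d$-simplification of its argument, obtained elementarily from the periodicity of Ramanujan sums rather than through the congruence-invariance already built into Theorem \ref{thmmodcount} (Lemma \ref{thminvariant}); the paper's route needs no new observation about $c_d$ but says nothing beyond the stated corollary. Either write-up is acceptable, and your interchange of finite sums is justified since only finitely many coefficients of $m_d$ are nonzero.
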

\begin{proof}
Note that $m_d \equiv \si_d(m_d) \mod \Phi_d(x)$. Applying Theorem \ref{thmmodcount}, we thus get
$$\si^i_n(a)=\frac{1}{n}\sum_{d\mid n}{G^d_i(\si_d(m_d))}=\frac{1}{n}\sum_{d \mid n}\sum_{s\ge 0}[x^s]\si_d(m_d) \cdot c_d(i-s)=\frac{1}{n}\sum_{d \mid n}\sum_{s\ge 0}\si^s_d(m_d) \cdot c_d(i-s).$$
\end{proof}


\begin{lem}\label{lembuildresidues}
Let $n$ be a positive integer. For each $d\mid n$, let $a_d \in \mathbb{K}[x]$. Then there exists a unique $n$-simplified $a \in \mathbb{K}[x]$ such that $a\equiv a_d \mod \Phi_d(x)$ for each $d\mid n$.
\end{lem}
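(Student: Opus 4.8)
The statement is a Chinese-Remainder-Theorem assertion in disguise. Since $x^n-1$ is monic of degree $n$, every class in $\mathbb{K}[x]/(x^n-1)$ has a unique $n$-simplified representative, so the claim is equivalent to the statement that the reduction map
\[
\mathbb{K}[x]/(x^n-1)\ \longrightarrow\ \prod_{d\mid n}\mathbb{K}[x]/(\Phi_d(x))
\]
is a bijection. I would prove this directly. The only point requiring care is that $\mathbb{K}$ need not be a field, so $\mathbb{K}[x]$ need not be a PID and one cannot invoke CRT over $\mathbb{K}[x]$ blindly.

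The key input is that the ideals $(\Phi_d(x))$, for $d\mid n$, are pairwise comaximal already in $\mathbb{K}[x]$. Indeed, $x^n-1$ is squarefree over $\mathbb{Q}$ (it shares no root with its derivative $nx^{n-1}$), so for distinct $d_1,d_2\mid n$ the factors $\Phi_{d_1}(x)$ and $\Phi_{d_2}(x)$ of $x^n-1$ are coprime in the PID $\mathbb{Q}[x]$; a B\'ezout identity $u\Phi_{d_1}+v\Phi_{d_2}=1$ with $u,v\in\mathbb{Q}[x]\subseteq\mathbb{K}[x]$ then witnesses comaximality over $\mathbb{K}[x]$. Given this, the standard Chinese Remainder Theorem for commutative rings applies: since the $(\Phi_d(x))$ are pairwise comaximal, their intersection equals their product, which (they being principal) is $\bigl(\prod_{d\mid n}\Phi_d(x)\bigr)=(x^n-1)$, and the displayed map is an isomorphism.

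Surjectivity then gives existence: choose $a_0\in\mathbb{K}[x]$ mapping to $(a_d \bmod \Phi_d(x))_{d\mid n}$ and set $a:=\si_n(a_0)$, which is $n$-simplified and still satisfies $a\equiv a_d \bmod \Phi_d(x)$ for each $d\mid n$ because $\Phi_d(x)\mid x^n-1$. Injectivity gives uniqueness: if $a,a'$ are both $n$-simplified with $a\equiv a_d\equiv a' \bmod \Phi_d(x)$ for every $d\mid n$, then $a-a'\equiv 0 \bmod (x^n-1)$, and since $\deg(a-a')<n$ while $x^n-1$ is monic of degree $n$, we get $a-a'=0$. As a bonus, once existence and uniqueness are in hand, Theorem \ref{thmmodcount} applied to $a$ with the choice $m_d=a_d$ shows $\si^i_n(a)=\frac{1}{n}\sum_{d\mid n}G^d_i(a_d)$, so $a$ is produced by the explicit construction advertised at the start of the section.

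I expect the only genuine obstacle to be the non-field coefficient ring, and it is dispatched entirely by the observation that the relevant B\'ezout coefficients can be taken in $\mathbb{Q}[x]$; everything else is bookkeeping, so the written proof should be short.
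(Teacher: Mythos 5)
Your proof is correct, but it takes a genuinely different route from the paper's. You recast the lemma as the Chinese Remainder Theorem for the pairwise comaximal ideals $(\Phi_d(x))$ of $\mathbb{K}[x]$, and you correctly isolate and dispatch the one non-routine point for a coefficient ring that need not be a field: comaximality is witnessed by B\'ezout identities computed in $\mathbb{Q}[x]$ (using squarefreeness of $x^n-1$ over $\mathbb{Q}$) and transported into $\mathbb{K}[x]$ via the $\mathbb{Q}$-algebra structure; uniqueness then follows from $\bigcap_{d\mid n}(\Phi_d(x))=(x^n-1)$ together with a degree argument against the monic polynomial $x^n-1$. The paper argues constructively instead: it iterates the correction step $c \mapsto c+\frac{(r-c)j}{n}\cdot\frac{x^n-1}{x^j-1}$, whose effect modulo each $\Phi_d(x)$ is read off from Lemma \ref{lempolfrac}, running through the divisors of $n$ in decreasing order to produce an explicit polynomial congruent to $a_d$ modulo $\Phi_d(x)$ for every $d\mid n$, and it gets uniqueness from Theorem \ref{thmmodcount}, since that formula determines every coefficient of an $n$-simplified polynomial from its residues. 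Your version is shorter and more standard, and its uniqueness part is logically independent of Theorem \ref{thmmodcount}; the paper's version buys an explicit recursive construction of $a$ and stays self-contained, avoiding the abstract comaximal-ideal form of CRT, which the introduction deliberately sets aside as not yielding manageable coefficient formulas. Note that both arguments genuinely use the $\mathbb{Q}$-algebra hypothesis: yours for the B\'ezout coefficients in $\mathbb{Q}[x]$, the paper's for the division by $n$ in its correction term.
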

\begin{proof}
We start with an auxilary construction. Let $c \in \mathbb{K}[x]$. Let $j$ be a divisor of $n$, and let $r \in \mathbb{K}[x]$. Let $$c'=c+\frac{(r-c)j}{n}(1+x^{j}+x^{2j}+\cdots+x^{n-j})=c+\frac{(r-c)j}{n}\cdot\frac{x^n-1}{x^j-1}.$$
For each $d\mid n$ with $d \nmid j$,  we have $c' \equiv c \mod \Phi_d(x)$ since $\frac{x^n-1}{x^j-1} \equiv 0 \mod \Phi_d(x)$ by Lemma \ref{lempolfrac}. Suppose instead that $d\mid j$. Then since $\frac{x^n-1}{x^j-1} \equiv \frac{n}{j} \mod \Phi_d(x)$ (by Lemma \ref{lempolfrac}), we have that $$c'=c+\frac{(r-c)j}{n}\cdot\frac{x^n-1}{x^j-1} \equiv c+\frac{(r-c)j}{n}\cdot\frac{n}{j}=r \mod \Phi_d(x).$$ Thus $c' \equiv r \mod \Phi_d(x)$ for each $d\mid j$ and $c' \equiv c \mod \Phi_d(x)$ for each $d\mid n$ with $d\nmid j$. The construction of $c'$ from $c$, $j$, and $r$ will be referred to as Construction (1). 

Let $\{d_1,d_2,d_3,\ldots,d_t\}$ be an ordered list of the divisors of $n$ such that $d_1>\cdots > d_t$. Notice that $d_l \nmid d_i$ for all $1\le l < i \le t$. Construct a sequence $(e_1,e_2,\ldots, e_t)$ recursively by $e_1=a_{d_1}$ and $$e_i=e_{i-1}+\frac{(a_{d_i}-e_{i-1})d_i}{n}\cdot\frac{x^n-1}{x^{d_i}-1}$$ for $1<i\le t$. Note that for $1<i\le t$, $e_i$ is constructed using Construction (1) where $c=e_{i-1}$, $r=a_{d_i}$, and $j=d_i$. Thus it follows from the properties of Construction (1) that $e_i \equiv a_{d_i} \mod \Phi_d(x)$ for all $d\mid d_i$, and that $e_i \equiv e_{i-1} \mod \Phi_d(x)$ for all $d\mid n$ such that $d\nmid d_i$. Using this, we can prove by induction over $i$ that $e_i \equiv a_{d_l} \mod \Phi_{d_l}(x)$ for each $1\le l\le i \le t$. Hence $e_t \equiv a_d \mod \Phi_{d}(x)$ for each $d\mid n$. Taking the $n$-simplification of $e_t$, we get an $n$-simplified polynomial $a$ satisfying $a \equiv a_d \mod \Phi_d(x)$ for each $d\mid n$. We know there can only be one such polynomial by Theorem \ref{thmmodcount}, completing the proof.
\end{proof}

\begin{rem}
 Note that since the polynomial $a$ constructed in Lemma \ref{lembuildresidues} exists and is unique, we have a formula for it due to Theorem \ref{thmmodcount}.
\end{rem}

\begin{thm}\label{thmrepelement}
Let $n$ be a positive integer. Let $a \in \mathbb{K}[x]$. Then $G^n(a)$ is the unique $n$-simplified polynomial which is congruent to $a \mod \Phi_n(x)$ and is congruent to $0 \mod x^d-1$ for each proper divisor $d$ of $n$.
\end{thm}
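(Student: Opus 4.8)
The plan is to reduce the statement entirely to Theorem~\ref{thmmodcount} and Lemma~\ref{lembuildresidues}. The one bookkeeping point to dispatch first is that, for an $n$-simplified polynomial $a'$, the condition ``$a'\equiv 0 \mod x^d-1$ for every proper divisor $d$ of $n$'' is equivalent to ``$a'\equiv 0\mod \Phi_d(x)$ for every proper divisor $d$ of $n$.'' One direction is immediate since $\Phi_d\mid x^d-1$; for the other, write $x^d-1=\prod_{e\mid d}\Phi_e(x)$ for a proper divisor $d$ and use that the $\Phi_e$ are pairwise coprime (over $\mathbb{Q}[x]$, hence over $\mathbb{K}[x]$) and that each divides $a'$, so their product does too.

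Granting this, Lemma~\ref{lembuildresidues} applied with $a_n=a$ and $a_d=0$ for each proper divisor $d$ of $n$ produces a unique $n$-simplified polynomial $a'$ with $a'\equiv a\mod\Phi_n(x)$ and $a'\equiv 0\mod\Phi_d(x)$ for every proper divisor $d$ of $n$; by the previous paragraph, $a'$ is exactly the unique $n$-simplified polynomial described in the theorem statement. So it only remains to show $a'=G^n(a)$.

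For this I would apply Theorem~\ref{thmmodcount} to the polynomial $a'$, taking $m_n=a$ (valid since $a\equiv a'\mod\Phi_n(x)$) and $m_d=0$ for each proper divisor $d$ of $n$ (valid since $a'\equiv 0\mod\Phi_d(x)$). Since $G^d_i(0)=0$ for all $i$, every term with $d<n$ drops out, and Theorem~\ref{thmmodcount} reads $\si^i_n(a')=\frac{1}{n} G^n_i(a)$. Because $a'$ is $n$-simplified we have $[x^i]a'=\si^i_n(a')$ for $0\le i<n$, so $a'=\frac{1}{n}\sum_{0\le i<n}G^n_i(a)x^i=G^n(a)$ by the definition of $G^n$. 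Together with the previous paragraph this shows $G^n(a)$ has the claimed properties and is the only $n$-simplified polynomial that does.

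Everything here is routine once Theorem~\ref{thmmodcount} is in hand; I expect the only mild friction to be the translation between the $x^d-1$ and $\Phi_d(x)$ formulations of the vanishing condition, which is why I would handle it at the start.
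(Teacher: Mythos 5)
Your proof is correct, and its first half is exactly the paper's: use Lemma \ref{lembuildresidues} with $a_n=a$, $a_d=0$ to get the unique $n$-simplified $a'$ with the prescribed residues modulo each $\Phi_d(x)$, then feed those residues back into Theorem \ref{thmmodcount} to conclude $a'=G^n(a)$ (your choice $m_n=a$ is in fact slightly cleaner than the paper's $m_n=a'$, since it lands directly on $G^n(a)$ without a silent appeal to the invariance lemma). Where you diverge is the passage from ``$\equiv 0 \bmod \Phi_d(x)$ for all proper $d\mid n$'' to ``$\equiv 0 \bmod x^d-1$ for all proper $d\mid n$'': the paper only uses the easy implication (for uniqueness) and then proves the vanishing modulo $x^d-1$ by a direct computation, showing $\si^i_d(a')=\frac{1}{n}\sum_{s}[x^s]a'\sum_{0\le j<n/d}c_n(jd+i-s)=0$ via Lemma \ref{lemramanujanroots}; you instead prove the full equivalence by a Chinese-Remainder-type divisibility argument. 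The paper's own remark records that equivalence, but only for $\mathbb{K}$ a field, and this is the one place where your write-up deserves a word of care: over a general commutative $\mathbb{Q}$-algebra, ``coprime in $\mathbb{K}[x]$'' should be read as comaximal, i.e.\ a B\'ezout identity $u\Phi_e+v\Phi_{e'}=1$ taken in $\mathbb{Q}[x]$ and pushed into $\mathbb{K}[x]$; with that reading, the step ``each $\Phi_e$ divides $a'$, hence their product does'' goes through by the usual computation $a'=(u\Phi_e+v\Phi_{e'})a'=\Phi_e\Phi_{e'}(uh_2+vh_1)$, with no field or integral-domain hypothesis on $\mathbb{K}$ and no cancellation needed. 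So your route is genuinely different at that step and, once the comaximality point is made explicit, it is actually a clean generalization of the paper's remark; what the paper's Ramanujan-sum computation buys instead is an argument that also works verbatim over base rings not containing $\mathbb{Q}$ in that step, and an explicit identity of independent interest, at the cost of being less conceptual.
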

\begin{proof}
By Lemma \ref{lembuildresidues}, there is a unique $n$-simplified polynomial $f\in \mathbb{K}[x]$ which is congruent to $0 \mod \Phi_d(x)$ for each proper divisor $d$ of $n$ and which is congruent to $a \mod \Phi_n(x)$. Given an integer $i$ with $0\le i <n$, by Theorem \ref{thmmodcount}, we have that 
$$[x^i]f=\frac{1}{n}\left(G^n_i(f)+\sum\limits_{d\mid n,d<n} G^d_i(0)\right)=\frac{1}{n}G^n_i(f).$$
Thus $f=G^n(f)$. Since $g\equiv 0 \mod x^d-1 \implies g \equiv 0 \mod \Phi_d(x)$ for $g\in \mathbb{K}[x]$, it remains only to show that $f\equiv 0 \mod x^d-1$ for each proper divisor $d\mid n$ (and we will get uniqueness for free). Let $d$ be a proper divisor of $n$ and $i \in \mathbb{Z}$ with $0 \le i <d$. Since $f=G^n(f)$, we have $[x^k]f = [x^k]G^n(f) = \frac{1}{n} G^n_k(f) = \frac{1}{n}\sum\limits_{s\geq 0}[x^s]f\cdot c_n(k-s)$ for every $0\leq k < n$, so that $$\si^i_d(f)=\sum\limits_{0\le j < \frac{n}{d}}[x^{jd+i}]f=\frac{1}{n}\sum\limits_{0\le j <\frac{n}{d}}\sum\limits_{s\ge 0}[x^s]f\cdot c_n(jd+i-s)=\frac{1}{n}\sum\limits_{s\ge 0}[x^s]f\sum\limits_{0\le j <\frac{n}{d}} c_n(jd+i-s).$$
Noting Lemma \ref{lemramanujanroots}, the second sum on the right, $$\sum\limits_{0\le j <\frac{n}{d}} c_n(jd+i-s)=\sum\limits_{0\le j <\frac{n}{d}} \sum\limits_{w\in W_n}w^{jd+i-s}=\sum\limits_{w\in W_n}\sum\limits_{0\le j <\frac{n}{d}} w^{jd+i-s}=0.$$
Thus $\si^i_d(f)=0$, and as a consequence, $f \equiv 0 \mod x^d-1$.
\end{proof}

\begin{rem}
Let $n$ be a positive integer and suppose $\mathbb{K}$ is a field. Recalling that $\prod\limits_{k\mid d}\Phi_k(x)=x^d-1$ and that all $\Phi_k(x)$ are pairwise coprime, we see that for a polynomial in $\mathbb{K}[x]$ to be congruent to $0 \mod \Phi_d(x)$ for all proper divisors $d$ of $n$ is the same as it being congruent to $0 \mod x^d-1$ for all proper divisors $d$ of $n$. Thus when $\mathbb{K}$ is restricted to being a field, we have an alternative path to showing that $f \equiv 0 \mod x^d-1$ for proper divisors $d$ of $n$ (once we have already shown that $f \equiv 0 \mod \Phi_d(x)$).
\end{rem}

\begin{cor}\label{corcompleteinvariant}
Let $n$ be a positive integer, and $\mathbb{K}$ be a commutative ring
which is torsion-free as a $\mathbb{Z}$-module. Then, the map sending
every $a \in \mathbb{K}[x]$ to $\sum\limits_{0\leq i < n} G_i^n(a)
x^i$ is a complete invariant on $\mathbb{K}[x]$ modulo $\Phi_n(x)$.
\end{cor}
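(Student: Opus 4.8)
The statement has two halves: that the map $\Psi\colon a \mapsto \sum_{0\le i<n} G^n_i(a)\,x^i$ is an invariant on $\mathbb{K}[x]$ modulo $\Phi_n(x)$, and that it is complete. Note first that $\Psi$ makes sense over an arbitrary commutative ring $\mathbb{K}$, since each $c_n(i-s)$ lies in $\mathbb{Z}$ and no division by $n$ occurs, and that $G^n_i$ depends only on $i$ modulo $n$. For the invariant direction I would check that the proof of Lemma~\ref{thminvariant} goes through verbatim over any commutative ring: if $a\equiv b \mod \Phi_n(x)$ in $\mathbb{K}[x]$, then since $\Phi_n$ is monic with integer coefficients we may write $a-b=\Phi_n(x)\,q(x)$ with $q=\sum_t q_t x^t\in\mathbb{K}[x]$, and $\mathbb{K}$-linearity of $G^n_i$ together with the identity $G^n_i(x^t c)=G^n_{i-t}(c)$ reduces $G^n_i(a-b)$ to a $\mathbb{K}$-combination of the integers $G^n_j(\Phi_n(x))=\sum_{s\ge0}[x^s]\Phi_n(x)\,c_n(j-s)$, each of which vanishes exactly as in that proof. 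Hence $\Psi(a)=\Psi(b)$; torsion-freeness is not needed here.

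The completeness direction is where the hypothesis on $\mathbb{K}$ enters. Suppose $\Psi(a)=\Psi(b)$ and put $c=a-b$, so $\Psi(c)=0$. The plan is to pass to $\mathbb{L}=\mathbb{Q}\otimes_{\mathbb{Z}}\mathbb{K}$, a commutative $\mathbb{Q}$-algebra; because $\mathbb{K}$ is torsion-free as a $\mathbb{Z}$-module the canonical map $\mathbb{K}\to\mathbb{L}$ is injective, hence so is $\mathbb{K}[x]\to\mathbb{L}[x]$. Working in $\mathbb{L}[x]$ we may divide by $n$, so $G^n(c)=\tfrac{1}{n}\Psi(c)=0$, and Theorem~\ref{thmrepelement} (applied over $\mathbb{L}$) identifies $G^n(c)$ as a polynomial congruent to $c \mod \Phi_n(x)$; therefore $c\equiv 0\mod\Phi_n(x)$ in $\mathbb{L}[x]$.

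It then remains to descend this congruence to $\mathbb{K}[x]$. Since $\Phi_n$ is monic, division with remainder in $\mathbb{K}[x]$ gives $c=\Phi_n(x)q(x)+r(x)$ with $q,r\in\mathbb{K}[x]$ and $\deg r<\deg\Phi_n$; viewing this in $\mathbb{L}[x]$, uniqueness of division by a monic polynomial together with $c\equiv0\mod\Phi_n(x)$ forces $r=0$ in $\mathbb{L}[x]$, and injectivity of $\mathbb{K}[x]\to\mathbb{L}[x]$ then gives $r=0$ in $\mathbb{K}[x]$. Thus $c\equiv 0\mod\Phi_n(x)$, i.e. $a\equiv b\mod\Phi_n(x)$, completing the proof. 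The only non-routine ingredient is this $\mathbb{K}\rightsquigarrow\mathbb{Q}\otimes_{\mathbb{Z}}\mathbb{K}\rightsquigarrow\mathbb{K}$ round trip, used to borrow Theorem~\ref{thmrepelement} (which is only available over $\mathbb{Q}$-algebras because of the factor $\tfrac{1}{n}$); I do not expect it to present a genuine obstacle, as everything else has already been done in the preceding lemmas.
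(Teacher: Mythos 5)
Your proposal is correct and follows essentially the same strategy as the paper: completeness is reduced, via Theorem \ref{thmrepelement}, to the $\mathbb{Q}$-algebra case, and then pulled back to $\mathbb{K}$ using torsion-freeness together with the fact that $\Phi_n(x)$ is monic. The only difference is in the mechanics of the descent (and it is a point in your favor): the paper writes the argument for $\mathbb{Z}[x]$, clearing denominators and using that $\mathbb{Z}[x]/\Phi_n(x)\mathbb{Z}[x]$ is free, hence torsion-free, and defers general torsion-free $\mathbb{K}$ to a remark, whereas you base-change to $\mathbb{Q}\otimes_{\mathbb{Z}}\mathbb{K}$ and descend by uniqueness of division by a monic polynomial, which proves the stated generality directly (and your direct verification that the invariance half needs no torsion-freeness matches the paper's implicit use of Lemma \ref{thminvariant}).
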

\begin{proof}
By Theorem \ref{thmrepelement}, this is true for $\mathbb{Q}[x] \mod \Phi_n(x)$ (and more generally,\\ $\mathbb{K}[x] \mod \Phi_n(x)$). Given $a,b \in \mathbb{Z}[x]$ with $G^n(a)=G^n(b)$, since $G^n$ is a complete invariant in $\mathbb{Q}[x] \mod \Phi_n(x)$, there exists some nonzero $j \in \mathbb{Z}$ such that $ja\equiv jb$ in $\mathbb{Z}[x] \mod \Phi_n(x)$. But $\mathbb{Z}[x] \mod \Phi_n(x)$ is free (since $\Phi_n(x)$ is monic), and hence is torsion-free. Thus $a\equiv b$ in $\mathbb{Z}[x] \mod \Phi_n(x)$ and $G^n$ is a complete invariant on $\mathbb{Z}[x] \mod \Phi_n(x)$ (rather than just an invariant).
\end{proof}

\begin{rem}
Corollary \ref{corcompleteinvariant} easily extends to become the following. Let $n$ be a positive integer. The map $\sum\limits_{0\le i <n}G^n_i(a)x^i$ is a complete invariant on $\mathbb{R}[x]$ where $R$ is a commutative ring torsion-free as a $\mathbb{Z}$-module.
\end{rem}

\section{Some Modular Enumeration Results}\label{Secap}
In this section, we demonstrate several applications of Theorem \ref{thmmodcount}. For some commonly studied generating functions $f$, formulas for residues modulo $\Phi_n(x)$ are already known while good formulas for $n$-simplifications are not \cite{Olive65,Sagan92,Desarmenien94,Lascoux94,Desarmenien83,Desarmenien92,Descouens06}. For such polynomials, one can apply Theorem \ref{thmmodcount} in order to get previously unknown modular enumeration results. 

For the sake of brevity, we only go into detail for one such generating function, the $q$-multinomial coefficient ${ j \brack k_1, k_2, \ldots, k_l }$ (Subsection \ref{Subbin}). Recently, Hartke and Radcliffe found a recursive formula for $S^i_n\left({ j \brack k_1, k_2, \ldots, k_l }\right)$ in the case of $n \mid j$ (Theorem 25 of \cite{Hartke13}). For the same case, we are able to use our approach to find a non-recursive formula (Remark \ref{remsimplemulti}). They pose finding a formula for the case of $n \nmid j$ where $n$ is not prime as an open problem. We resolve this problem with Theorem \ref{thmmulti}. Our formula reduces the problem to cases where $j<n$.

In Subsection \ref{Subsum}, we consider an open problem on subset sums posed in \cite{Kitchloo93} and we use Theorem \ref{thmmodcount} to reduce it to small cases.

\subsection{$q$-multinomial coefficients}\label{Subbin}

In this subsection, we consider the $q$-multinomial coefficient ${ j \brack k_1, k_2, \ldots, k_l }$, which satisfies $${ j \brack k_1, k_2, \ldots, k_l }=\frac{[j]!}{[k_1]!\cdots [k_l]!},$$ where $[s]!$ is defined as $1(1+q)(1+q+q^2)\cdots (1+q+\cdots + q^{s-1})$ and where we require $k_1+k_2+\cdots +k_l=j$. If $k_1+k_2+\cdots+k_l \neq j$, we define ${ j \brack k_1, k_2, \ldots, k_l }$ to be zero. Combinatorially, ${ j \brack k_1, k_2, \ldots, k_l }$ is the generating function keeping track of the distribution of inversion numbers of permutations of the multiset $\{1^{k_1},2^{k_2},\ldots, l^{k_l}\}$. Following standard conventions, we consider $q$-multinomial coefficients to be polynomials of $q$ rather than $x$.

An elementary proof of the following lemma can be found in \cite{Sagan92}.
\begin{lem}\label{lemmultiphi}
Let $j, k_1, k_2, \ldots, k_l \in \mathbb{Z}$. Let $j=j_1n+j_0$ and $k_i=k_{i,1}n+k_{i,0}$ where $0\leq n_0, k_{i,0} < n$ for $1 \leq i \leq l$. Then,
$${ j \brack k_1, k_2, \ldots, k_l } \equiv {j_1 \choose k_{1,1},k_{2,1},\ldots,k_{l,1}}{ j_0 \brack k_{1,0},k_{2,0},\ldots,k_{l,0}} \mod \Phi_n(q)$$
\end{lem}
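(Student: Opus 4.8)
The plan is to evaluate both sides at a primitive $n$-th root of unity $w \in W_n$, since $\Phi_n(q)$ is monic in $\ZZ[q]$ and by Gauss's lemma it suffices to verify the congruence in $\mathbb{Q}[q]$, which in turn follows from equality of the polynomial values at every $w \in W_n$ (the roots of $\Phi_n$). So I would fix $w \in W_n$ and compute ${ j \brack k_1,\ldots,k_l}\big|_{q=w}$, aiming to show it equals ${j_1 \choose k_{1,1},\ldots,k_{l,1}}\cdot { j_0 \brack k_{1,0},\ldots,k_{l,0}}\big|_{q=w}$. First I would reduce to the case $k_1 + \cdots + k_l = j$: if this fails, the left side is zero by definition, and I would need to check the right side also vanishes — this holds because $\sum k_i = j$ forces (reducing mod $n$, using $0 \le k_{i,0}, j_0 < n$) that $\sum k_{i,0} \equiv j_0$, and then $\sum k_{i,1} \equiv j_1$ only after accounting for carries; the cleanest route is to observe that if $\sum k_{i,0} \ne j_0$ as integers then $\sum_i k_{i,0} = j_0 + cn$ for some integer $c \ne 0$ with $\sum k_{i,1} + c = j_1$, which makes the ordinary multinomial ${j_1 \choose k_{1,1},\ldots,k_{l,1}}$ count compositions that don't sum correctly (it is zero), and similarly if $\sum k_{i,0} = j_0$ but $\sum k_{i,1} \ne j_1$. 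In all mismatched cases one side or the other is zero consistently.

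Assuming $\sum k_i = j$, the core computation is the evaluation of $[s]!\big|_{q=w}$. Writing $s = s_1 n + s_0$ with $0 \le s_0 < n$, I would split the product $[s]! = \prod_{t=1}^{s}(1 + q + \cdots + q^{t-1}) = \prod_{t=1}^{s} \frac{q^t - 1}{q - 1}$. At $q = w$, the factor $\frac{w^t - 1}{w - 1}$ vanishes exactly when $n \mid t$, which happens $s_1$ times among $t \in \{1,\ldots,s\}$. This is the source of the ordinary multinomial coefficient. For the $q$-multinomial to have a nonzero limit (rather than genuinely vanishing), the total multiplicity of the vanishing factor $(q^n-1)/(q^{?}) \ldots$ must balance between numerator and denominator: numerator contributes $j_1$ zeros, denominator contributes $\sum_i k_{i,1}$ zeros, and since $\sum k_i = j$ and each $k_{i,0} < n$, a carry analysis shows $j_1 = \sum k_{i,1} + (\text{number of carries})$, so in fact $j_1 \ge \sum k_{i,1}$ and the remaining ${j_1 \choose k_{1,1},\ldots,k_{l,1}}$ factor will capture exactly the ratio of these vanishing contributions via a limiting argument. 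The remaining non-vanishing factors, grouped appropriately, reassemble into ${ j_0 \brack k_{1,0},\ldots,k_{l,0}}\big|_{q=w}$ together with periodic blocks that are fixed by $w \mapsto w$ (each full block $\prod_{t=an+1}^{an+n}\frac{w^t-1}{w-1}$, after removing its single vanishing factor, evaluates to a quantity independent of $a$, and these cancel between numerator and denominator down to the residual ${j_1 \choose \cdots}$ and the $j_0$-block).

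The cleanest way to organize the limiting argument is not to plug in $w$ naively but to work in the localization or to track orders of vanishing: write ${ j \brack k_1,\ldots,k_l} = (q^n-1)^{?}\cdot(\text{unit at }w) \cdots$, or more simply, factor out $\Phi_n(q)$-powers explicitly. Concretely, I would use the standard identity $[s]! = [n]_q!^{s_1} \cdot q^{\binom{n}{2}s_1(\ldots)} \cdots$ — actually the precise factorization I want is the $q$-Lucas-type identity, and the honest approach is: prove ${ j \brack k_1,\ldots,k_l } \equiv {j_1 \choose k_{1,1},\ldots,k_{l,1}}{ j_0 \brack k_{1,0},\ldots,k_{l,0}} \pmod{\Phi_n(q)}$ by induction on $l$, reducing the multinomial to a product of $q$-binomials via ${ j \brack k_1,\ldots,k_l} = \prod_{i=1}^{l-1}{k_i + k_{i+1}+\cdots+k_l \brack k_i}$ and invoking the $l=2$ case (the $q$-Lucas theorem for $q$-binomial coefficients modulo $\Phi_n(q)$), then checking the carry bookkeeping on the indices matches up. Since the excerpt cites \cite{Sagan92} for an elementary proof, I would expect the intended argument to be exactly this root-of-unity evaluation combined with the product telescoping; the main obstacle is the careful handling of the orders of vanishing at $w$ and confirming the carry structure $j_1 = \sum_i k_{i,1} + (\text{carries})$ lines up so that the residual ordinary multinomial coefficient appears with the right value — this is bookkeeping-heavy but not conceptually deep. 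I would present the $q$-binomial case in full and then reduce the multinomial case to it.
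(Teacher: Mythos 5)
The paper never proves this lemma---it defers to \cite{Sagan92} (and elsewhere treats the binomial $q$-Lucas theorem as known, citing \cite{Olive65})---so you are supplying an argument rather than matching one, and the route you settle on is legitimate and completable: establish the $l=2$ case (binomial $q$-Lucas modulo $\Phi_n(q)$) and reduce the multinomial statement to it via the telescoping factorization ${ j \brack k_1,\ldots,k_l}=\prod_{i=1}^{l-1}{k_i+\cdots+k_l \brack k_i}$, valid when $k_1+\cdots+k_l=j$. Your handling of the degenerate case is muddled, though the clean statement is short: if both $\sum_i k_{i,0}=j_0$ and $\sum_i k_{i,1}=j_1$ held, then $\sum_i k_i=j$; so when $\sum_i k_i\neq j$ one factor on the right is identically zero, matching the left side. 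Two deferred steps are the actual content. First, in the evaluation argument, numerator and denominator of $[m]!/([k]!\,[m-k]!)$ both vanish at $w\in W_n$ (to orders $\lfloor m/n\rfloor$ and $\lfloor k/n\rfloor+\lfloor (m-k)/n\rfloor$), so the ``value at $w$'' must be organized as the limit you gesture at: when the orders differ the polynomial is divisible by $\Phi_n(q)$ and the right side vanishes; when they agree the vanishing factors pair off via $\frac{q^{an}-1}{q^n-1}\to a$, yielding the ordinary binomial, while each nonvanishing factor $1+w+\cdots+w^{t-1}$ depends only on $t\bmod n$, reassembling the $q$-binomial of residues (your displayed factorization of $[s]!$ is garbled, but this computation replaces it). Second, in the telescoping reduction the carry bookkeeping is the crux: with $m_i=k_i+\cdots+k_l$ and $\mu_i=m_i\bmod n$, a carry at some stage forces $\mu_i<k_{i,0}$, so ${\mu_i \brack k_{i,0}}$ vanishes and the product is $0$, consistent with the right-hand side (whose two factors are then also zero); with no carries both the integer and $q$-parts telescope exactly. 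Written out, this gives a self-contained proof, at the cost of being longer than the paper's citation.
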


We are now in a position to find an over-arching formula for the coefficients of the $n$-simplification of a $q$-multinomial coefficient.
\begin{thm}\label{thmmulti}
Let $n, j, k_1, k_2, \ldots, k_l \in \mathbb{Z}$. Let $r_d(s)$ be the residue of $s$ modulo $d$ for all $s$ and $d$ in $\mathbb{Z}$. Let $R_d=r_d(k_1)+r_d(k_2)+\cdots r_d(k_l)$. Then,
$$S^i_n\left( {j \brack k_1,k_2,\ldots,k_l} \right)=\frac{1}{n} \sum\limits_{\substack{d|n, \\ R_d<d}} {\lfloor \frac{j}{d} \rfloor \choose \lfloor \frac{k_1}{d} \rfloor,\lfloor \frac{k_2}{d}\rfloor,\ldots,\lfloor \frac{k_l}{d}\rfloor} \sum\limits_{0 \leq s <d} \si^s_d \left( {r_d(j) \brack r_d(k_1),r_d(k_2),\ldots,r_d(k_l)}  \right)c_d(i-s).$$

\end{thm}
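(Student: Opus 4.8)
The plan is to combine Corollary~\ref{cormodcount} with the residue computation provided by Lemma~\ref{lemmultiphi}. The corollary already tells us that for any $a \in \mathbb{K}[x]$ and any choice of polynomials $m_d \equiv a \mod \Phi_d(q)$ we have
\[
\si^i_n(a) = \frac{1}{n}\sum_{d\mid n}\sum_{0\le s<d}\si^s_d(m_d)\cdot c_d(i-s),
\]
so the entire task reduces to plugging in $a = {j \brack k_1,\ldots,k_l}$ and selecting, for each $d\mid n$, a convenient representative $m_d$ modulo $\Phi_d(q)$. Lemma~\ref{lemmultiphi} (applied with $n$ replaced by $d$) hands us exactly such a representative: writing $j = \lfloor j/d\rfloor d + r_d(j)$ and $k_i = \lfloor k_i/d\rfloor d + r_d(k_i)$, it gives
\[
{j \brack k_1,\ldots,k_l} \equiv {\lfloor j/d\rfloor \choose \lfloor k_1/d\rfloor,\ldots,\lfloor k_l/d\rfloor}{r_d(j) \brack r_d(k_1),\ldots,r_d(k_l)} \mod \Phi_d(q).
\]

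First I would fix the notation, noting that $r_d(\cdot)$ is the residue function and $R_d = \sum_i r_d(k_i)$, and observe the key arithmetical point: the ordinary multinomial coefficient ${\lfloor j/d\rfloor \choose \lfloor k_1/d\rfloor,\ldots,\lfloor k_l/d\rfloor}$ is nonzero only when $\sum_i \lfloor k_i/d\rfloor = \lfloor j/d\rfloor$, which (given that the full multinomial is only nonzero when $\sum k_i = j$) forces $R_d = r_d(j) \bmod d$, i.e.\ is compatible precisely in the regime the theorem sums over. Moreover, the $q$-multinomial ${r_d(j) \brack r_d(k_1),\ldots,r_d(k_l)}$ is itself zero unless $\sum_i r_d(k_i) = r_d(j)$, which happens exactly when $R_d < d$ (since each $r_d(k_i) < d$ and $r_d(j) < d$, while $R_d \equiv r_d(j) \bmod d$ always holds when $\sum k_i = j$). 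So the restriction $R_d < d$ in the sum is not an extra hypothesis but simply records which divisors $d$ contribute a nonzero term; for the remaining divisors the representative $m_d$ is congruent to $0$ and drops out. I would spell this out as a short lemma or inline remark so that the displayed sum over $\{d\mid n : R_d < d\}$ is justified rather than asserted.

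Then I would substitute this $m_d$ into Corollary~\ref{cormodcount}. Since the ordinary multinomial coefficient ${\lfloor j/d\rfloor \choose \lfloor k_1/d\rfloor,\ldots,\lfloor k_l/d\rfloor}$ is a scalar (an element of $\mathbb{Q}\subseteq\mathbb{K}$, viewing everything in $\mathbb{K}[q]$), it pulls out of the inner sum $\sum_{0\le s<d}\si^s_d(m_d)c_d(i-s)$, leaving exactly
\[
\si^i_n\!\left({j \brack k_1,\ldots,k_l}\right) = \frac{1}{n}\sum_{\substack{d\mid n\\ R_d<d}}{\lfloor j/d\rfloor \choose \lfloor k_1/d\rfloor,\ldots,\lfloor k_l/d\rfloor}\sum_{0\le s<d}\si^s_d\!\left({r_d(j) \brack r_d(k_1),\ldots,r_d(k_l)}\right)c_d(i-s),
\]
which is the claimed formula. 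The only genuine content beyond mechanical substitution is the bookkeeping in the previous paragraph — verifying that the vanishing/non-vanishing patterns of the ordinary and $q$-multinomial coefficients align, and that the index set $\{d\mid n: R_d<d\}$ correctly captures the surviving terms — so that is the step I expect to require the most care, though it amounts to elementary reasoning about residues rather than anything deep. One should also briefly note the edge case $\sum k_i \ne j$, where both sides are $0$ (the left because the $q$-multinomial is defined to be zero, the right because $R_d \not\equiv r_d(j)$ for the relevant $d$, or more simply because every surviving $q$-multinomial ${r_d(j)\brack r_d(k_1),\ldots}$ has $\sum r_d(k_i)\ne r_d(j)$), so the formula holds vacuously there too.
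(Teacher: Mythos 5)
Your proof is correct and takes essentially the same route as the paper's: apply Corollary \ref{cormodcount} with the representatives $m_d$ supplied by Lemma \ref{lemmultiphi}, pull the scalar multinomial coefficient out of $\si^s_d$, and observe that divisors with $R_d\ge d$ drop out because the $q$-multinomial ${r_d(j) \brack r_d(k_1),\ldots,r_d(k_l)}$ vanishes there. One quibble with your closing aside on the degenerate case $\sum_i k_i\neq j$ (which the paper's proof does not address either): the surviving $q$-multinomials need not vanish there (e.g.\ $l=1$, $j=2$, $k_1=0$, $d=2$ gives ${0 \brack 0}=1$), and the right-hand side is zero instead because the ordinary multinomial coefficient ${\lfloor j/d\rfloor \choose \lfloor k_1/d\rfloor,\ldots,\lfloor k_l/d\rfloor}$ has parts not summing to $\lfloor j/d\rfloor$ (under the convention that such a multinomial is $0$).
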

\begin{proof}
By Corollary \ref{cormodcount} and Lemma \ref{lemmultiphi}, 
$$S^i_n\left( {j \brack k_1,k_2,\ldots,k_l} \right)=\frac{1}{n} \sum\limits_{d|n}\sum\limits_{0 \leq s <d} \si^s_d \left( {\lfloor \frac{j}{d} \rfloor \choose \lfloor \frac{k_1}{d} \rfloor,\lfloor \frac{k_2}{d}\rfloor,\ldots,\lfloor \frac{k_l}{d}\rfloor}{r_d(j) \brack r_d(k_1),r_d(k_2),\ldots,r_d(k_l)}  \right)c_d(i-s).$$
Pulling out the $ {\lfloor \frac{j}{d} \rfloor \choose \lfloor \frac{k_1}{d} \rfloor,\lfloor \frac{k_2}{d}\rfloor,\ldots,\lfloor \frac{k_l}{d}\rfloor}$ term yields the formula
$$S^i_n\left( {j \brack k_1,k_2,\ldots,k_l} \right)=\frac{1}{n} \sum\limits_{d|n} {\lfloor \frac{j}{d} \rfloor \choose \lfloor \frac{k_1}{d} \rfloor,\lfloor \frac{k_2}{d}\rfloor,\ldots,\lfloor \frac{k_l}{d}\rfloor} \sum\limits_{0 \leq s <d} \si^s_d \left( {r_d(j) \brack r_d(k_1),r_d(k_2),\ldots,r_d(k_l)}  \right)c_d(i-s).$$
Observing when the $q$-multinomial coefficient is zero yields the desired formula.
\end{proof}

\begin{rem}\label{remsimplemulti}
Theorem \ref{thmmulti} simplifies in the case of $n \mid j$. Indeed, for each $d|n$, ${r_d(j) \brack r_d(k_1),r_d(k_2),\ldots,r_d(k_l)}$ is $0$ if $d\nmid (k_1, k_2, \ldots, k_l)$ and is $1$ otherwise. Thus we have
$$S^i_n\left( {j \brack k_1,k_2,\ldots,k_l} \right)=\frac{1}{n} \sum\limits_{d \mid (k_1, k_2, \ldots, k_l, n)} { \frac{j}{d} \choose\frac{k_1}{d}, \frac{k_2}{d},\ldots, \frac{k_l}{d} } c_d(i).$$
\end{rem}

\subsection{Subset-sums}\label{Subsum}
In this subsection, we consider the number of subsets of $\{1,2,\ldots,j\}$ whose sum is congruent to $i$ modulo $n$. In doing so, we make progress on an open problem posed in \cite{Kitchloo93}.

Let $N(j)$ be the polynomial $\sum\limits_{S\subseteq \{1,2,\ldots,j\}} x^{\sum S}$. (Here, $\sum S$ denotes the sum of all elements of $S$.)

\begin{lem}\label{lemsubset-sum}
Let $d$ and $j$ be integers such that $0 < d \leq j$, and let $r$ be the remainder of $j$ modulo $d$. We have that $N(j) \equiv 0 \mod \Phi_d(x)$ if $d$ is even, and $N(j) \equiv 2^{\lfloor j/d \rfloor}N(r) \mod \Phi_d(x)$ otherwise. 
\end{lem}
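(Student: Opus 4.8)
The plan is to use the product formula $N(j)=\prod_{k=1}^{j}(1+x^{k})$, evaluate it at the primitive $d$-th roots of unity, and then transfer the resulting identity back to a congruence modulo $\Phi_d(x)$ by the same monicity/Gauss's lemma argument used in the proof of Lemma \ref{lempolfrac}.

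First I would fix $w\in W_d$ and analyze $N(j)(w)=\prod_{k=1}^{j}(1+w^{k})$. Writing $j=\lfloor j/d\rfloor\cdot d+r$ and splitting the product into $\lfloor j/d\rfloor$ consecutive blocks of $d$ terms followed by a tail of $r$ terms, the relation $w^{d}=1$ makes every full block equal to the single factor $P:=\prod_{k=1}^{d}(1+w^{k})$, while the tail equals $\prod_{k=1}^{r}(1+w^{k})=N(r)(w)$ (with the convention $N(0)=1$). Hence $N(j)(w)=P^{\lfloor j/d\rfloor}\,N(r)(w)$ for every $w\in W_d$.

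Next I would compute $P$. Since $w$ has order $d$, the set $\{w^{k}:1\le k\le d\}$ is exactly the set of all $d$-th roots of unity, so $P=\prod_{\zeta^{d}=1}(1+\zeta)$. Substituting $Y=-1$ into $\prod_{\zeta^{d}=1}(Y-\zeta)=Y^{d}-1$ gives $P=(-1)^{d}\bigl((-1)^{d}-1\bigr)$, which equals $0$ when $d$ is even and $2$ when $d$ is odd. Combining with the previous step: for $d$ even, $N(j)(w)=0$ for all $w\in W_d$ (here we use $j\ge d$, so $\lfloor j/d\rfloor\ge1$); for $d$ odd, $N(j)(w)=2^{\lfloor j/d\rfloor}N(r)(w)$ for all $w\in W_d$.

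Finally I would conclude. Because $\Phi_d(x)=\prod_{w\in W_d}(x-w)$ is separable with root set $W_d$, a polynomial in $\mathbb{Q}[x]$ vanishing on all of $W_d$ is divisible by $\Phi_d(x)$ in $\mathbb{Q}[x]$. Thus for $d$ even, $\Phi_d(x)\mid N(j)$ in $\mathbb{Q}[x]$, and since $\Phi_d$ is monic and $N(j)\in\mathbb{Z}[x]$, Gauss's lemma promotes this to $N(j)\equiv0\bmod\Phi_d(x)$; for $d$ odd, $N(j)-2^{\lfloor j/d\rfloor}N(r)$ vanishes on $W_d$ and lies in $\mathbb{Z}[x]$, so likewise $N(j)\equiv2^{\lfloor j/d\rfloor}N(r)\bmod\Phi_d(x)$. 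I do not expect a real obstacle here; the only mildly delicate point is the passage from an identity at the roots of $\Phi_d$ to an honest congruence in $\mathbb{Z}[x]$, and that is routine and already invoked earlier in the paper.
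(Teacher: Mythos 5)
Your proof is correct, but it follows a genuinely different route from the paper's. The paper argues combinatorially: it reduces to computing $N(d) \bmod \Phi_d(x)$, encodes subsets of $\{1,\dots,d\}$ as binary words of length $d$, and observes that cyclically shifting a word with $k$ ones (neither all ones nor all zeros) increases the subset sum by $k$ modulo $d$, so each such orbit contributes a generating function periodic on $d$, hence congruent to $0 \bmod \Phi_d(x)$ by Lemma \ref{lemperiodzero}; the two exceptional words then give $2$ when $d$ is odd, and $1+x^{d/2}\equiv 0 \bmod \Phi_d(x)$ when $d$ is even. You instead work from the product formula $N(j)=\prod_{k=1}^{j}(1+x^{k})$, evaluate at each $w\in W_d$, collapse every full block of $d$ consecutive factors to $\prod_{\zeta^{d}=1}(1+\zeta)=(-1)^{d}\bigl((-1)^{d}-1\bigr)$ via the substitution $Y=-1$ in $Y^{d}-1$, and then pass from vanishing of $N(j)$ (resp.\ of $N(j)-2^{\lfloor j/d\rfloor}N(r)$) on $W_d$ to a congruence in $\mathbb{Z}[x]$ using separability of $\Phi_d$ and Gauss's lemma, exactly as in the proof of Lemma \ref{lempolfrac}; you also correctly invoke $j\ge d$ to guarantee at least one full block in the even case. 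Your version is shorter and, by working with evaluations, sidesteps a small imprecision in the paper (the asserted identity $N(j)=N(d)^{\lfloor j/d\rfloor}N(r)$ holds only modulo $x^{d}-1$, not as polynomials), while the paper's version stays entirely inside $\mathbb{Z}[x]$ without any detour through roots of unity (and, like Lemma \ref{lemperiodzero}, would work over an arbitrary ring) and is in the same combinatorial spirit as the cyclic-shift machinery developed later for Dyck words.
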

\begin{proof}
Clearly, $N(j)=N(d)^{\lfloor j/d \rfloor}N(r)$. Since $\lfloor j/d \rfloor >0$, it remains to show only that $N(d) \equiv 0 \mod \Phi_d(x)$ if $d$ is even and $N(d) \equiv 2 \mod \Phi_d(x)$ if $d$ is odd. Each subset $S$ of $\{1,2,\ldots,d\}$ corresponds with a binary word $w$ of length $d$ where $w_i=1$ if $i\in S$ and $w_i=0$ otherwise. Let $w$ be an arbitrary binary word of length $d$. 

Suppose $w$ consists neither of just ones nor of just zeros. Let $k$ be the number of ones in $w$. Then applying successive cyclic shifts to $w$, that is repeatedly killing the final letter and reinserting it as the first letter, increases the corresponding subset sum by $k \mod d$ with each shift. The generating function for the resulting subset sums is thus periodic on $d$ with period $k$. By Lemma \ref{lemperiodzero}, this generating function is congruent to $0 \mod \Phi_d(x)$. 

If $d$ is odd, the remaining two cases for $w$, where $w$ has just ones or has just zeros, both correspond with subsets whose subset sums are congruent to $0 \mod d$. Hence $N(d) \equiv 2 \mod \Phi_d(x)$ in this case. If $d$ is even, the empty subset has subset sum congruent to $0 \mod d$ and the subset $\{1,2,\ldots,d\}$ has subset sum congruent to $\frac{d}{2} \mod d$ (because $1+2+\cdots+d=\frac{d(d+1)}{2}$). Since $1+x^{d/2}\equiv 0 \mod \Phi_d(x)$, it follows that $N(d) \equiv 0 \mod \Phi_d(x)$. 
\end{proof}

\begin{thm}\label{thmsubset-sum}
Let $n$ and $j$ be integers such that $0<n\le j$. Let $r_d$ be the remainder of $j$ modulo $d$ for each odd $d\mid n$. Then
$$\si^i_n(N(j))= \frac{1}{n}\sum_{\substack{d\mid n,\\ d \text{ odd}}}2^{\lfloor j/d \rfloor}\sum_{0\le s <d} {\si^s_d(N(r_d))}c_d(i-s).$$
\end{thm}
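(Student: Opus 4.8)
The plan is to apply Corollary \ref{cormodcount} directly, using Lemma \ref{lemsubset-sum} to supply the residues $m_d$ of $N(j)$ modulo $\Phi_d(x)$ for each $d \mid n$. The hypothesis $0 < n \le j$ guarantees $0 < d \le j$ for every divisor $d$ of $n$, so Lemma \ref{lemsubset-sum} applies to each such $d$: when $d$ is even we may take $m_d = 0$, and when $d$ is odd we may take $m_d = 2^{\lfloor j/d \rfloor} N(r_d)$. Since scalar multiplication commutes with $\si^s_d$, we have $\si^s_d(m_d) = 2^{\lfloor j/d \rfloor}\si^s_d(N(r_d))$ for odd $d$.

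First I would write down the conclusion of Corollary \ref{cormodcount} for $a = N(j)$:
$$\si^i_n(N(j)) = \frac{1}{n}\sum_{d \mid n}\sum_{0 \le s < d}\si^s_d(m_d)\cdot c_d(i-s).$$
Then I would split the outer sum over $d \mid n$ into the even divisors and the odd divisors. For each even $d$, the term $\si^s_d(m_d) = \si^s_d(0) = 0$, so the entire inner sum vanishes and that term drops out. For each odd $d$, substitute $\si^s_d(m_d) = 2^{\lfloor j/d\rfloor}\si^s_d(N(r_d))$ and pull the constant $2^{\lfloor j/d\rfloor}$ out of the inner sum. This yields exactly
$$\si^i_n(N(j)) = \frac{1}{n}\sum_{\substack{d \mid n,\\ d\text{ odd}}}2^{\lfloor j/d\rfloor}\sum_{0\le s<d}\si^s_d(N(r_d))\,c_d(i-s),$$
which is the claimed formula.

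There is no real obstacle here; the theorem is essentially a transcription of Corollary \ref{cormodcount} with the residue data of Lemma \ref{lemsubset-sum} plugged in. The only point requiring a word of care is verifying that Lemma \ref{lemsubset-sum} is genuinely applicable to \emph{every} divisor $d$ of $n$ — i.e.\ that $0 < d \le j$ holds for all $d \mid n$ — which follows immediately from $1 \le d \le n \le j$. One might also remark that the formula reduces the computation to the finitely many cases $\si^s_d(N(r_d))$ with $r_d = r_d(j) < d \le n$, so that for fixed $n$ only boundedly many base polynomials $N(r)$ with $r < n$ need to be understood.
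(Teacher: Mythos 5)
Your proposal is correct and follows exactly the paper's argument: the paper likewise proves Theorem \ref{thmsubset-sum} by plugging the residues from Lemma \ref{lemsubset-sum} into Corollary \ref{cormodcount}, discarding the even divisors and pulling out the factor $2^{\lfloor j/d\rfloor}$. Your extra check that $0<d\le j$ holds for every $d\mid n$ is a sensible (if implicit in the paper) verification and does not change the argument.
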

\begin{proof}
Plugging Lemma \ref{lemsubset-sum} into Corollary \ref{cormodcount} and pulling out the exponent term, we get the desired result.
\end{proof}



\begin{rem}
Each of \cite{Stanley72}, \cite{Kitchloo93}, and \cite{Li12} previously enumerated $\si^i_n(N(j))$ either for the case of $n=j$ or for the case of $j=n-1$. In \cite{Stanley72}, the problem arose in the context of single-error correction codes. Kitchloo and Pachter, in \cite{Kitchloo93}, were able to enumerate $\si^0_n(N(j))$ in the cases where $n\mid j$, and posed the remaining cases as an open problem. One consequence of Theorem \ref{thmsubset-sum} is a nice extension of \cite{Kitchloo93}'s formula; in fact, when $n\le j$ and for each odd $d\mid n$, $d\mid j$, we have $$\si^i_n(N(j))= \frac{1}{n}\sum\limits_{\substack{d\mid n,\\ d \text{ odd}}}2^{\lfloor j/d \rfloor}c_d(i).$$ As is the case for this formula, the $d$ odd requirement is behind much of the computational power of Theorem \ref{thmsubset-sum}. For example, as a consequence of Theorem \ref{thmsubset-sum}, the number of subsets of $\{1, \ldots, 22\}$ which have subset sum congruent to $5 \mod 12$ is 
$$\frac{1}{12}\sum_{\substack{d\mid 12,\\ d \text{ odd}}}2^{\lfloor 22/d \rfloor}\sum_{0\le s <d} {\si^s_d(N(22-d\lfloor \frac{22}{d}\rfloor))}c_d(5-s)$$
$$= \frac{1}{12}(2^{22}+2^7(c_3(5)+c_3(4)))= \frac{1}{12}(2^{22}+2^7(-2))=349504,$$
a conclusion which intuitively one should not be able to quickly reach by hand.
\end{rem}

\section{Major Index of Dyck Words}\label{Seccat}
In this section, ``\emph{word}'' means ``finite binary word''. We denote the $i$-th letter of a word $w$ by $w_i$.

\begin{defn}
A word is \emph{flat} if it contains at least as many zeros as ones.
\end{defn}
For example, $11000$ and $001110$ are flat. On the other hand, $00111$ is not because it contains more ones than zeros. Note that if one thinks of a binary word as representing a path, with each $0$ corresponding with a step to the right and each $1$ corresponding with a vertical step, then flat words correspond with paths that are flatter than they are tall.

\begin{defn} A word is a \emph{Dyck word} if its first $k$ letters form a flat word for all $k$. Otherwise, the word is \emph{non-Dyck}. 
\end{defn}
For example, $001101000$ is a Dyck word. On the other hand, $001110000$ is not because its first $5$ letters do not form a flat word. 

\begin{defn}
The \emph{major index} of a word $w$, denoted $m(w)$, is $\sum\limits_{w_i=1,w_{i+1}=0}i$.
\end{defn}
For example, the major index of $0011000101001$ is $4+8+10=22$ because the occurrences of a one followed by a zero are in positions $4$, $8$, and $10$. Note that the final letter is not considered to be followed by the first letter.

This notion of major index, when applied to Dyck words with $j$ ones and $j$ zeros, corresponds with major index of $j \times j$ Catalan paths, as defined in \cite{Haglund08}. The generating function for this statistic is the $q$-Catalan number $C_n(q)$. The $q$-Catalan number, one of the most natural $q$-analogues of the Catalan numbers, satisfies $C_n(q)=\frac{1-q}{1-q^{n+1}}{2n \brack n}$. Although the coefficients of $C_n(q)$ have no known formula, in this section we are able to find a formula for $\mathbf{S}^i_n(C_j(q))$ for given $j, i, n$. In some cases the formula is complete, while in others it reduces the problem to cases where $j<n$; since the number of such cases is finite for a fixed $n$, our formula can be used to find a non-recursive one for any fixed $n$. In finding our results, we first enumerate remainders of $q$-Catalan numbers modulo cyclotomic polynomials. We prove our enumeration in two separate ways, the first in the context of major index of Catalan paths, and the second using generating functions. In the prior, we also introduce a previously unknown cyclic group operation which interacts interestingly with major index of binary words. 

\begin{defn}
Let $w$ be a word. Then $\gamma(w)$ is $w$ with its final letter killed and then appended to the beginning. We call the words which can be reached from $w$ by repeated applications of $\gamma$ the \emph{cyclic shifts} of $w$. Note that $\gamma$ is invertible.
\end{defn}
For example, the cyclic shifts of $0010110$ are $0010110$, $0001011$, $1000101$, $1100010$, $0110001$, $1011000$, and $0101100$.

\begin{defn}
Let $w$ be a flat non-Dyck word. Then $\delta(w)$ is defined as follows. Let $w'=\gamma(w)$. If $w'$ is non-Dyck, then $\delta(w)=w'$ (\emph{shifting case 1}). Otherwise, find the smallest positive $k$ such that the first $k$ letters of $w'$ contain the same number of ones and zeros. Because $w$ is flat and non-Dyck, such a $k$ exists. Since $w'$ is a Dyck word, its first letter is zero and its $k$-th letter is one. Swapping these two letters, we get a flat non-Dyck word $\delta(w)$ (\emph{shifting case 2}). 
\end{defn}

Note that the domain and range of $\delta$ is flat non-Dyck words. In addition, $\delta$ preserves the number of ones and the number of zeros in a word. For example, $\delta (11000)=01100$ because $\gamma (11000)=01100$ is non-Dyck (an example of shifting case 1). Now let's compute $\delta (01100)$. Since $\gamma(01100)=00110$ which is Dyck, we are in shifting case 2. Swapping the first and fourth letters, we reach $\delta(01100)=10100$.

\begin{lem}
The map $\delta$ is invertible on flat non-Dyck words. 
\end{lem}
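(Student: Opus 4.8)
The plan is to show that $\delta$ is a bijection from the set of flat non-Dyck words (with a fixed number of ones and zeros) to itself by exhibiting an explicit inverse. Since $\delta$ already preserves the number of ones and the number of zeros, and since for fixed letter counts the domain is a finite set, it suffices to construct a left inverse (equivalently, to show $\delta$ is injective, but an explicit inverse is cleaner and will likely be reused later when analyzing how $\delta$ shifts major index). The map $\delta$ was defined by a two-case recipe, so its inverse should also have two cases, one undoing shifting case 1 and one undoing shifting case 2, and the crux is to determine from the output $v=\delta(w)$ alone which case was applied.

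First I would analyze the two cases of $\delta$ and record structural signatures of the output. In shifting case 1, $v=\gamma(w)$ is non-Dyck and $w=\gamma^{-1}(v)$; note that in this case the last letter of $w$ is the first letter of $v$. In shifting case 2, $w'=\gamma(w)$ is a \emph{Dyck} word whose first letter is $0$ and whose $k$-th letter (for the minimal balanced prefix length $k$) is $1$, and $v=\delta(w)$ is obtained from $w'$ by swapping positions $1$ and $k$; so $v$ has a $1$ in position $1$. The key observation to nail down is: the outputs arising from case 1 versus case 2 are distinguishable. Outputs from case 2 begin with $1$; I would check that an output $v$ beginning with $1$ that came from case 1 would force $w=\gamma^{-1}(v)$ to have been a Dyck word (since the first letter of $v$ is the old last letter of $w$, and prepending... ) — more carefully, I expect the right invariant is whether $\gamma^{-1}(v)$ is non-Dyck: if it is, we are in case 1 and $\delta^{-1}(v)=\gamma^{-1}(v)$; if $\gamma^{-1}(v)$ is Dyck (equivalently flat-but-not-Dyck only barely), we must be in case 2.

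Then, to invert case 2, given $v$ with a $1$ in position $1$: recover $k$ as (say) the smallest positive integer such that the prefix of $v$ of length $k$, after swapping its first letter to $0$, is balanced — or directly as determined by the structure of $v$ — swap positions $1$ and $k$ of $v$ to recover $w'$, verify $w'$ is Dyck with the minimality property, and set $\delta^{-1}(v)=\gamma^{-1}(w')$; finally check this lies in the flat non-Dyck domain. I would then verify $\delta^{-1}\circ\delta=\mathrm{id}$ and $\delta\circ\delta^{-1}=\mathrm{id}$ on flat non-Dyck words, which amounts to the case-consistency checks above together with routine prefix-counting.

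The main obstacle will be the case-discrimination step: proving cleanly that the value $\delta(w)$ always carries enough information to tell whether shifting case 1 or case 2 produced it, and in case 2 that the swap position $k$ is unambiguously recoverable from the output. Concretely, I expect the delicate point is verifying that in shifting case 2 the resulting word $v$ is non-Dyck with last-letter/first-letter structure that cannot be confused with a case-1 output, and that the minimal balanced prefix used going forward matches the one used going backward. Once that combinatorial bookkeeping is pinned down, the rest is a direct check that the proposed $\delta^{-1}$ is well-defined on flat non-Dyck words and mutually inverse to $\delta$.
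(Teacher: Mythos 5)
Your skeleton is essentially the paper's argument run in the mirror: the paper proves surjectivity of the self-map of a finite set by exhibiting, for each flat non-Dyck target $w$, an explicit preimage ($\gamma^{-1}(w)$ when $\gamma^{-1}(w)$ is non-Dyck, i.e.\ when $w$ begins with $0$ or some prefix has at least two more ones than zeros; otherwise $\gamma^{-1}(u)$, where $u$ is $w$ with its first and $k$-th letters swapped), while you propose injectivity via a left inverse; by finiteness either suffices, and your guessed discriminator (is $\gamma^{-1}(v)$ non-Dyck?) is in fact exactly the trichotomy the paper's case split encodes. However, as written the proposal leaves precisely the load-bearing steps unproven. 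For the left-inverse route you must show that if $w$ is in shifting case 2 then $v=\delta(w)$ satisfies ``$\gamma^{-1}(v)$ is a Dyck word,'' since otherwise your recipe could take the wrong branch; this is true, but it is the combinatorial heart of the matter and needs a prefix comparison with $w'=\gamma(w)$ (each prefix of $\gamma^{-1}(v)$ comes from a prefix of the Dyck word $w'$ by deleting its leading zero and, beyond position $k$, also turning the one at position $k$ into a zero; flatness survives because prefixes of $w'$ of length less than $k$ have strictly more zeros than ones). You flag this as ``the main obstacle'' but supply no argument.

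Second, the one concrete mechanism you do give -- recovering $k$ as the smallest $j$ such that the length-$j$ prefix of $v$, after setting its first letter to $0$, is balanced -- is wrong. With $v$ equal to $w'$ with positions $1$ and $k$ swapped, setting $v_1$ to $0$ gives, for $j<k$, exactly the length-$j$ prefix of $w'$, which is unbalanced by minimality of $k$, and for $j\ge k$ a prefix whose excess of zeros exceeds that of $w'$ by $2$, hence is at least $2$; so no prefix ever becomes balanced and your $k$ does not exist. (Reading it instead as ``swap the first and $j$-th letters'' does not help, since balance of a prefix is invariant under permuting its letters, and the smallest balanced prefix of $v$ can be shorter than $k$: for $v=10100=\delta(01100)$ the balanced prefix $10$ has length $2$ while $k=4$.) The correct recovery, which is what the paper's proof uses, is $k=1+\max\{j:\text{the length-}j\text{ prefix of }v\text{ has more ones than zeros}\}$; one then checks $v_k=0$ (from flatness), that swapping positions $1$ and $k$ of $v$ yields a Dyck word whose minimal balanced prefix has length exactly $k$, and that $\gamma^{-1}$ of that Dyck word is non-Dyck, so that $\delta$ really returns $v$ through shifting case 2. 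With those verifications added your argument closes, but as it stands the crux is missing.
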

\begin{proof}
To prove the invertibility of a map from a finite set to itself, we need only check the map's surjectivity. Now let us show that $\delta$ is surjective. 

If a flat non-Dyck word $w$ begins with zero, it is easy to see that $w=\delta(\gamma^{-1}(w))$ because $\gamma^{-1}(w)$ is non-Dyck. If a flat non-Dyck word $w$ begins with one, then there are two cases:

\emph{First case:} There is some $k$ such that the first $k$ letters of $w$ contain at least two more ones than they do zeros. In this case, $\delta(\gamma^{-1}(w))=w$ because $\gamma^{-1}(w)$ is non-Dyck.

\emph{Second case:} For all $k$, the first $k$ letters of $w$ contain at most one more one than they do zeros. Let $k$ be the largest $k$ such that the first $k-1$ letters of $w$ contain more ones than zeros and $k\leq n$. (Such a $k$ exists because $w$ is non-Dyck.) Note that $w_k=0$ because $w$ is flat. Swapping the first and $k$-th letter of $w$ yields a Dyck word $u$. Note that the smallest positive $j$ such that the first $j$ letters of $u$ contain the same number of ones and zeros satisfies $j=k$. Furthermore, $\gamma^{-1}(u)$ is non-Dyck because such a $j$ exists and $u_1=0$. It follows that $\delta(\gamma^{-1}(u))=w$.
\end{proof}

\begin{defn}
The \emph{descent count} of a word $w$ of length $n$, denoted $d(w)$, is $$(1 \text{ if }w_n=1,w_1=0)+\sum\limits_{w_i=1,w_{i+1}=0}1.$$ 
\end{defn}
For example, the descent count of $0011000101001$ is $4$ because there are three occurrences of a one followed by a zero (in positions $4$, $8$, and $10$), and because $w_n=1$ and $w_1=0$.

Note that $d(\gamma(w))=d(w)$ and $m(\gamma(w))-m(w)\equiv d(w) \mod n$ for all words $w$ of length $n$.\footnote{This is because $\gamma$ simply increases the position of each occurrence of a $1$ followed directly by a $0$ in $w$ by one modulo $n$.} We will use this implicitly from here out.

\begin{prop}\label{propcycle}
Let $w$ be a flat non-Dyck word of length $n$ containing at least two ones (and thus at least two zeros). Then $$m(\delta(w))-m(w) \equiv m(\delta(\delta(w)))-m(\delta(w)) \not\equiv 0 \mod n.$$
\end{prop}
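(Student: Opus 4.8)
The plan is to analyze the two shifting cases of $\delta$ and track how the descent count $d(w)$ changes, since $m(\delta(w)) - m(w) \equiv d(w) \bmod n$ up to the correction coming from the letter-swap in shifting case 2. Concretely, I would first recall that $\delta(w) = \gamma(w)$ in shifting case 1, so there $m(\delta(w)) - m(w) \equiv d(w) \bmod n$ exactly (since $d(\gamma(w)) = d(w)$ and $m(\gamma(w)) - m(w) \equiv d(w)$). In shifting case 2, $\delta(w)$ is obtained from $\gamma(w)$ by swapping the first letter (a $0$) with the $k$-th letter (a $1$), where $k$ is minimal with a balanced prefix; I would compute the change in major index induced by this swap in terms of the local letter pattern around positions $1$ and $k$, and likewise the change in descent count. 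The key claim to isolate is that, writing $w'' = \delta(w)$, one has $m(w'') - m(w) \equiv d(w) \bmod n$ in \emph{both} cases — i.e. the swap in case 2 contributes nothing modulo $n$ to the major index beyond what the cyclic shift already contributes. This should fall out because the swap moves the "descent-like" structure in a way that is balanced: the prefix of $\gamma(w)$ up to position $k$ contains equally many ones and zeros, which forces the positional contributions to cancel mod $n$. I'd verify this with the small examples already in the text ($\delta(11000) = 01100$, $\delta(01100) = 10100$) as a sanity check.

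Granting $m(\delta(w)) - m(w) \equiv d(w) \bmod n$, the statement reduces to two things: (i) $d(\delta(w)) = d(w)$, so that iterating gives the same increment each time, and (ii) $d(w) \not\equiv 0 \bmod n$. For (i), in shifting case 1 this is immediate since $d(\gamma(w)) = d(w)$; in shifting case 2 I would check directly that swapping the first $0$ of the Dyck word $\gamma(w)$ with the $k$-th letter (the $1$ closing the first balanced prefix) changes neither the count of descents $w_i = 1, w_{i+1} = 0$ nor the wrap-around term — the careful bookkeeping is around positions $k-1, k, k+1$ and position $n$, but the flatness/Dyck constraints pin down the relevant letters. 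For (ii), since $w$ is flat and non-Dyck with at least two ones, it has at least one genuine descent, so $d(w) \geq 1$; and $d(w)$ is the number of descents of a word of length $n$ with strictly fewer than $n/2 \cdot 2$... more carefully, $d(w) \le$ the number of ones $\le n/2 < n$, but I also need $d(w) \ne 0$, which holds because a word with a one and a later zero (which exists since $w$ is flat, has $\ge 2$ ones, and is not all-ones-then-all-zeros... actually $w$ non-Dyck already forces a $1$ before enough $0$'s) must have at least one descent. So $0 < d(w) < n$, giving $d(w) \not\equiv 0 \bmod n$.

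The main obstacle I anticipate is the exact major-index computation in shifting case 2: showing that the swap of positions $1$ and $k$ in $\gamma(w)$ alters $m$ by a multiple of $n$ (equivalently, by exactly $0$ mod $n$ relative to the clean $\gamma$-shift). This requires understanding precisely which pairs $(i, i+1)$ go from descent to non-descent and vice versa under the swap, and summing their positions; the balancedness of the length-$k$ prefix of $\gamma(w)$ is what I expect to make the sum vanish mod $n$, but making that rigorous means carefully casing on whether $k = 2$, whether $k = n$, and on the letters $w'_{k-1}, w'_{k+1}$. A secondary subtlety is confirming $d(\delta(w)) = d(w)$ in case 2 with the same care. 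Once the increment is pinned to $d(w)$ and shown constant under iteration, the two displayed congruences follow at once, and non-vanishing mod $n$ is the easy part above.
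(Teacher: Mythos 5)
The central claim your plan rests on is false. You propose to show that $m(\delta(w))-m(w)\equiv d(w)\bmod n$ in \emph{both} shifting cases and that $d(\delta(w))=d(w)$ always, and then conclude by iterating. Neither holds when the balanced prefix of $\gamma(w)$ has length $k=2$. Concretely, take $n=5$ and $w=10010$: it is flat, non-Dyck, with two ones; $\gamma(w)=01001$ is Dyck with $k=2$, so $\delta(w)=10001$. Here $m(w)=5$, $m(\delta(w))=1$, so $m(\delta(w))-m(w)\equiv 1\bmod 5$, while $d(w)=2$; moreover $d(\delta(w))=1\neq d(w)$. So the swap in shifting case 2 does \emph{not} always contribute $0\bmod n$ beyond the cyclic shift, and the descent count is not invariant under $\delta$ (in other subcases it can also \emph{increase} by $1$, e.g.\ when $k=n$ or when $\gamma(w)_n=0$ with $k\neq 2$). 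The examples you planned to sanity-check against ($\delta(11000)$, $\delta(01100)$) have $k\neq 2$ and would not expose this.

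Because of this, the proposition cannot be proved by the clean statement ``increment $\equiv d(w)$ and $d$ is $\delta$-invariant''; the paper's proof instead splits into six cases (shifting case 1; and shifting case 2 refined by $k=2$, $k=n$, $2<k<n$, and the value of $\gamma(w)_n$), finds that the increment is $d(w)$ in some cases and $d(w)-1$ exactly when $k=2$, records how $d$ changes ($-1$, $0$, or $+1$), and — crucially — tracks which case $\delta(w)$ lands in, so that the two consecutive increments agree even though neither the increment formula nor $d$ is constant. Your non-vanishing argument is also too weak as stated: knowing $0<d(w)<n$ does not rule out increment $\equiv 0$ when the increment is $d(w)-1$; one must show $d(w)\geq 2$ in the $k=2$ cases, and this is precisely where the hypothesis that $w$ has at least two ones (and two zeros) is used (a $k=2$ word starts with $1$ and ends with $0$, and with only one descent it would be $1^b0^c$, forcing $b=1$, contradicting two ones). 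So the gap is not mere bookkeeping you deferred: the asserted cancellation in case 2 and the asserted invariance of $d$ are both wrong, and the correct argument needs the transition structure between cases to salvage the equality of consecutive increments.
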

\begin{proof}
Let $k$ be the smallest positive $k$ such that the first $k$ letters of $\gamma(w)$ contain the same number of zeros as ones. There are six cases:
\begin{enumerate}
\item $w$ is in shifting case 1. Then $\delta(w)=\gamma(w)$. Thus $m(\delta(w))-m(w) \equiv d(w) \mod n$, and $d(\delta(w))=d(w)$. Furthermore, $\delta(w)$ is in one of cases (1), (2), (3), and (4). Indeed, $\delta(w)$ cannot fall into cases (5) and (6) because for words $y$ in those cases, $\gamma^{-1}(y)$ is a Dyck word.
\item $w$ is in shifting case 2, $k\neq 2$, $k\neq n$, and $\gamma(w)_n=1$. These restrictions imply that $\gamma(w)_1=\gamma(w)_2=0$, $\gamma(w)_{k-1}=1$, and $\gamma(w)_{k+1}=0$. As a consequence, $m(\delta(w))-m(w)\equiv d(w) \mod n$ and $d(\delta(w))=d(w)$. Furthermore, $\delta(w)$ must be in case (1).
\item $w$ is in shifting case 2, $k\neq 2$, $k\neq n$, and $\gamma(w)_n=0$. These restrictions imply that $\gamma(w)_1=\gamma(w)_2=0$, $\gamma(w)_{k-1}=1$, and $\gamma(w)_{k+1}=0$. As a consequence, $m(\delta(w))-m(w)\equiv d(w) \mod n$ and $d(\delta(w))=d(w)+1$. Furthermore, $\delta(w)$ falls in one of cases (5) and (6).
\item $w$ is in shifting case 2 and $k=n$. It follows that $\gamma(w)_1=\gamma(w)_2=0$ and $\gamma(w)_{n-1}=\gamma(w)_{n}=1$. As a consequence, $m(\delta(w))-m(w)\equiv d(w) \mod n$ and $d(\delta(w))=d(w)+1$. Furthermore, $\delta(w)$ can fall into only case (5).
\item $w$ is in shifting case 2, $k=2$, and $\gamma(w)_n=1$. Note $\gamma(w)_{k+1}=0$. Thus $m(\delta(w))-m(w)\equiv d(w)-1 \mod n$ and $d(\delta(w))=d(w)-1$. Furthermore, $\delta(w)$ is in case (1).
\item $w$ is in shifting case 2,  $k=2$, and $\gamma(w)_n=0$. Note $\gamma(w)_{k+1}=0$. Thus $m(\delta(w))-m(w) \equiv d(w)-1 \mod n$ and $d(\delta(w))=d(w)$. Furthermore, $\delta(w)$ is in one of cases (5) and (6).
\end{enumerate}
The above cases imply that $m(\delta(w))-m(w) \equiv m(\delta(\delta(w)))-m(\delta(w)) \mod n$. For $m(\delta(w))-m(w) \equiv 0 \mod n$ to be true, $w$ would need to be in one of cases (5) and (6) with $d(w)<2$. This cannot happen when $w$ contains at least two ones and at least two zeros. 
\end{proof}


\begin{defn}
Let $d$ be an integer greater than one. We say a Dyck word $w$ of length $kd$ is \emph{$d$-rigid} if the following is true:
\begin{itemize}
\item The subword $w_{jd+1}\cdots w_{jd+d}$ for each $0\le j<k$ either is $d$ ones, is $d$ zeros, or contains precisely $d-1$ zeros (in this case, the subword is called a \emph{rigid-type-2 interval}), or contains precisely $d-1$ ones (in this case, the subword is called a \emph{rigid-type-3 interval}). 
\item There are the same number of zeros as ones preceding any rigid-type-2 interval, and the same number of zeros as ones in the letters from the beginning of $w$ to the end of any rigid-type-3 interval.
\end{itemize}
\end{defn}
For example, $000111001000111101$ is $3$-rigid. So is $000111010000111110$, since the order of the letters in rigid-type-2 an rigid-type-3 intervals are not restricted in the definition.

\begin{defn}
Let $d$ be an integer greater than one. A \emph{$d$-straightened Dyck word} $w$ is a $d$-rigid Dyck word such that all of its rigid-type-2 intervals end in one and all of its rigid-type-3 intervals begin with zero.
\end{defn}
For example, $000111001000111011$ is $3$-straightened. Note that $000111010000111110$ is not $3$-straightened, even though it is $3$-rigid.

\begin{defn}
Let $T_d(j,k)$ denote the number of $d$-straightened Dyck words containing $dj$ zeros and $dk$ ones if $j \ge k$. Otherwise, let $T_d(j,k)$ denote the number of $d$-straightened Dyck words containing $dk-1$ zeros and $dj+1$ ones.
\end{defn}

\begin{lem}\label{lemgetT}
Let $d,j,k$ be non-negative integers such that $d>1$. Then,
$$T_d(j,k)={{j+k}\choose{k}}.$$
\end{lem}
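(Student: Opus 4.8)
The plan is to establish a bijection between $d$-straightened Dyck words of the relevant size and lattice paths counted by $\binom{j+k}{k}$, or, more efficiently, to recognize that a $d$-straightened Dyck word is completely determined by the sequence of \emph{types} of its length-$d$ blocks, subject to a Dyck-like constraint, and then count those sequences directly. First I would observe that in a $d$-straightened word, each block $w_{jd+1}\cdots w_{jd+d}$ that is ``$d$ ones'' or ``$d$ zeros'' is entirely forced once we know it is of that type, and each rigid-type-2 (resp. rigid-type-3) interval is \emph{also} forced: the straightening condition says a rigid-type-2 interval ends in a one (so it is $d-1$ zeros followed by a one, i.e. $0^{d-1}1$) and a rigid-type-3 interval begins with a zero (so it is $0$ followed by $d-1$ ones, i.e. $01^{d-1}$). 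Thus a $d$-straightened word is determined entirely by the word $W$ over the alphabet $\{Z, O, A, B\}$ recording, block by block, whether that block is all-zeros, all-ones, rigid-type-2, or rigid-type-3.

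Next I would translate the two bulleted conditions in the definition of $d$-rigid into conditions on $W$. Reading a block of type $Z$ contributes a net $+d$ to (zeros $-$ ones); type $O$ contributes $-d$; a rigid-type-2 block $0^{d-1}1$ contributes $+(d-2)$ but, crucially, the condition ``same number of zeros as ones preceding any rigid-type-2 interval'' forces the partial count to be $0$ just before an $A$; similarly ``same number of zeros as ones from the beginning to the end of any rigid-type-3 interval'' forces the partial count to be $0$ just after a $B$. Combining these with the overall Dyck (flatness) condition — every prefix has at least as many zeros as ones — I would show these constraints are equivalent to saying that in $W$, stripping the $A$'s and $B$'s, the remaining word in $\{Z,O\}$ is a Dyck word in the usual sense (each prefix has at least as many $Z$'s as $O$'s), \emph{and} each $A$ sits at a position where the running $Z$-minus-$O$ count is zero, each $B$ likewise, with the type-2/type-3 internal letters causing no prefix violation because $d-1$ zeros come before the single one (resp. the single zero comes before the $d-1$ ones). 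The upshot is that the placement of $A$'s and $B$'s among the ``valleys'' (the points where the $Z/O$ count returns to zero) is unconstrained beyond being at such a valley, and — here is where I would be careful — I expect the bookkeeping to collapse so that the total count of $d$-straightened words with $dj$ zeros and $dk$ ones (or $dk-1$ zeros and $dj+1$ ones when $j<k$) becomes a single unconstrained choice: which of the $j+k$ blocks are the $k$ ``one-heavy'' ones (types $O$ and $B$) versus the $j$ ``zero-heavy'' ones (types $Z$ and $A$). That choice is counted by $\binom{j+k}{k}$.

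The cleanest route, and the one I would actually write, is therefore a direct bijection: given a choice of which $k$ of the $j+k$ block-positions are one-heavy, read the block sequence left to right, maintaining the running balance of zeros minus ones over completed blocks; a zero-heavy block is rendered as $0^d$ if the balance would stay nonnegative and we are not at a forced valley, and as the type-2 interval $0^{d-1}1$ exactly when the balance is currently $0$ (this is the only way to keep flatness while remaining one-heavy's dual — I would make precise that type-2 is needed precisely at a valley), and symmetrically for one-heavy blocks and type-3 intervals, with $1^d$ otherwise. One checks this produces a $d$-straightened Dyck word and that the map is invertible by reading off, from each block of a $d$-straightened word, whether it is one-heavy or zero-heavy. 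The case $j < k$ is handled the same way after noting that $dk-1$ zeros and $dj+1$ ones still partition into $j+k$ blocks of size $d$ with $k$ one-heavy blocks, the ``$-1/+1$'' being absorbed by the first and last block types.

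The main obstacle I anticipate is verifying that the valley constraints on the $A$'s and $B$'s do not over- or under-count — i.e. proving that ``at a valley'' is both necessary (forced by the rigidity/straightening conditions) and the \emph{only} constraint, so that once the one-heavy/zero-heavy pattern is fixed the whole word is uniquely reconstructed. This is essentially a careful induction on blocks tracking the prefix balance, together with the observation that within a type-2 or type-3 block the single ``wrong'' letter is positioned ($0^{d-1}1$, $01^{d-1}$) precisely so as never to violate flatness given that the block begins at a valley. Once that is nailed down, the count $\binom{j+k}{k}$ is immediate.
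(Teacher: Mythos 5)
There is a genuine gap at exactly the point you flagged: the claim that a $d$-straightened Dyck word is uniquely reconstructed from its pattern of one-heavy versus zero-heavy blocks is false, and the map you describe is not a bijection. Concretely, take $d=3$, $j=k=2$. Both $000\,000\,111\,111$ and $001\,000\,111\,011$ are valid $3$-straightened Dyck words (in the second, the first block $001$ is a rigid-type-2 interval preceded by balance $0$ and ending in a one, and the last block $011$ is a rigid-type-3 interval beginning with zero after which the balance is $0$), yet both have the same block pattern (zero-heavy, zero-heavy, one-heavy, one-heavy). Meanwhile the pattern (zero-heavy, one-heavy, one-heavy, zero-heavy) admits no $3$-straightened word at all. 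So the word-to-pattern map is neither injective nor surjective; the total does come out to $\binom{4}{2}=6$ (the six words are $000000111111$, $000111000111$, $001011000111$, $001000111011$, $000111001011$, $001011001011$), but that equality is precisely the content of the lemma and is not delivered by your construction. The specific failure in your forward map is the rule ``render a zero-heavy block as $0^{d-1}1$ exactly when the running balance is $0$'': nothing in the definitions forces a zero-heavy block sitting at a valley to be a rigid-type-2 interval rather than $0^d$, so at each valley there is a genuine binary choice (constrained only by what later blocks can absorb), and your deterministic rendering misses words while your claimed inverse identifies distinct words.

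By contrast, the paper sidesteps this entirely with a Pascal-type recursion: with the base cases $T_d(j,0)=T_d(0,k)=1$, one shows $T_d(j,k)=T_d(j-1,k)+T_d(j,k-1)$ by classifying what the final length-$d$ block of a $d$-straightened word can be in the four regimes $j>k$, $j=k$, $j=k-1$, $j<k-1$ (it is always either $d$ ones or one other forced block type, e.g.\ $d$ zeros, or $0\,1^{d-1}$ when $j=k$, or $0^{d-1}1$ when $j=k-1$), and noting that deleting that block leaves a $d$-straightened word counted by one of the two smaller $T_d$'s; this also explains the slightly odd definition of $T_d(j,k)$ for $j<k$. If you want to salvage a bijective proof, you would need a finer encoding than the one-heavy/zero-heavy pattern (for instance, one that records the choices made at valleys), since that pattern alone does not carry enough information. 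A small additional slip: in the case $j<k$ the words counted have $j$ one-heavy blocks and $k$ zero-heavy ones, not $k$ one-heavy; the binomial symmetry $\binom{j+k}{k}=\binom{j+k}{j}$ hides this, but it should be stated correctly.
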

\begin{proof}

If either $j$ or $k$ equals zero, it is easy to see that $T_d(j,k)=1$. We now consider the remaining cases, showing that $T_d(j,k)=T_d(j-1,k)+T_d(j,k-1)$ in each case. If $j>k$, then every $d$-straightened Dyck word containing $dj$ zeros and $dk$ ones ends either with $d$ ones or with $d$ zeros. If $j=k$, then every $d$-straightened Dyck word containing $dj$ zeros and $dk$ ones ends either with $d$ ones or with a zero followed by $d-1$ ones. If $j=k-1$, then every $d$-straightened Dyck word containing $dk-1$ zeros and $dj+1=d(k-1)+1$ ones ends either with $d$ ones or with $d-1$ zeros followed by a one. If $j<k-1$, then every $d$-straightened Dyck word containing $dk-1$ zeros and $dj+1$ ones ends either with $d$ ones or with $d$ zeros. Noting the definition of $T_d(j,k)$, it follows that in each of these cases, $T_d(j,k)=T_d(j-1,k)+T_d(j,k-1)$; thus we have recursively demonstrated the lemma.
\end{proof}








For the rest of the section, let $X(a,b)$ be the generating function for major index of Dyck words containing exactly $a$ ones and $b$ zeros. Let $Y_n(a,b)$ be the generating function for major index of $n$-rigid Dyck words containing exactly $a$ ones and $b$ zeros.

\begin{lem}\label{lemfilterwords1}
Fix non-negative integers $a,b,k,n$ such that $a+b=kn$ and $n>1$. Then, 
$$X(a,b) \equiv Y_n(a,b) \mod \Phi_n(q) \text{ in } \mathbb{Z}[q].$$
\end{lem}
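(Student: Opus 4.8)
The plan is to exhibit an explicit bijection-with-sign, or rather a major-index-preserving-mod-$\Phi_n(q)$ reduction, from Dyck words with $a$ ones and $b$ zeros to $n$-rigid Dyck words with the same content. Concretely, I would partition the set of all Dyck words of the given content into equivalence classes, show that each class contributes $0 \bmod \Phi_n(q)$ to $X(a,b)$ unless it contains an $n$-rigid word, and show that the classes containing an $n$-rigid word contribute exactly the major-index generating function of that word modulo $\Phi_n(q)$, thereby summing to $Y_n(a,b)$. The natural tool for the ``contributes $0$'' part is Lemma~\ref{lemperiodzero}: if a subcollection of words has a major-index generating function that is periodic on $n$, it vanishes mod $\Phi_n(q)$. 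So the real content is to group the non-$n$-rigid Dyck words into blocks whose major indices, read mod $n$, form full arithmetic progressions with a common difference coprime-enough to $n$ to force periodicity.

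The mechanism for building those blocks should be the map $\delta$ (together with $\gamma$) studied in Proposition~\ref{propcycle}. The idea: look at a Dyck word $w$ of length $kn$ that is \emph{not} $n$-rigid. Then some length-$n$ block $w_{jn+1}\cdots w_{jn+d}$ violates the rigidity condition — either it is a block that is neither all-ones, all-zeros, nor has exactly $n-1$ of one symbol, or it is a ``type-2/type-3'' block appearing with the wrong balance of ones and zeros before it. In the first situation, the offending block, viewed on its own as a cyclic word of length $n$ with at least two ones and at least two zeros, can be cyclically rotated (via $\delta$ applied inside that block, with the rest of $w$ held fixed) to generate an orbit whose major-index contributions shift by a fixed nonzero amount mod $n$ at each step, by Proposition~\ref{propcycle}; such an orbit's generating function is periodic on $n$ and hence $\equiv 0 \bmod \Phi_n(q)$. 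One has to check that applying $\delta$ locally inside one length-$n$ block changes $m(w)$ by exactly the change it produces on the isolated block (true, since $\delta$ only permutes letters within that block and the major index is a sum of positions of descents, and the inter-block descents are unaffected — this uses that the block boundaries sit at multiples of $n$, so a descent straddling a boundary contributes a position divisible by $n$, invisible mod $n$). This is the step I expect to be the main obstacle: making precise which block to ``act on'' when several blocks are non-rigid, ensuring the action is well-defined on the whole word and partitions the non-rigid words into disjoint $\delta$-orbits, and handling the balance condition (the second bullet of $d$-rigidity) — a block can be locally of rigid type but globally misplaced, and one must rotate the \emph{prefix} structure, not just a single block, to fix it.

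For the second situation (a block that looks type-2 or type-3 but has the wrong number of ones before it), I would instead act on the ``bad prefix'': cyclically shift the Dyck-word structure so as to move the imbalance, again using $\gamma$/$\delta$, and again invoke Proposition~\ref{propcycle} to get a nonzero mod-$n$ shift in major index across the orbit, hence periodicity, hence vanishing mod $\Phi_n(q)$. Finally, for the surviving words — the $n$-rigid ones — the equivalence class should be a singleton (the rigidity condition is exactly ``no $\delta$-move is available''), so each contributes its own monomial $q^{m(w)}$, and summing over all $n$-rigid Dyck words of content $(a,b)$ gives exactly $Y_n(a,b)$. The bookkeeping to confirm that $n$-rigid $\iff$ fixed by every move in our repertoire, and that the moves generate an equivalence relation whose non-singleton classes are precisely $\delta$-orbits of the kind covered by Proposition~\ref{propcycle}, is where care is needed; the congruence $X(a,b)\equiv Y_n(a,b)\bmod\Phi_n(q)$ then follows by summing the class-by-class congruences, all of which live in $\mathbb{Z}[q]$ as required.
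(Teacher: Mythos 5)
Your overall strategy (decompose the major index block-by-block mod $n$, isolate an offending length-$n$ block, and invoke Lemma~\ref{lemperiodzero} together with $\gamma$ and $\delta$) points in the right direction, but the central mechanism you propose --- a well-defined $\delta$/$\gamma$-orbit structure on the non-rigid Dyck words themselves, each orbit having a \emph{periodic} major-index generating function --- does not exist as described, and you do not supply a substitute. The problematic situation is exactly the one you flag: when the maximal $n$-rigid prefix of $w$ is balanced, the next length-$n$ block must itself be a Dyck word with at least two ones, and on Dyck words $\delta$ is simply not defined (its domain and range are flat non-Dyck words), while applying $\gamma$ to such a block can produce a block beginning with a one, which destroys Dyckness of the whole word, so the orbit escapes the set you are trying to partition. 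Moreover, one should not expect each natural class of Dyck blocks to have a periodic generating function at all: vanishing mod $\Phi_n(q)$ is strictly weaker than periodicity, and this is precisely the point where a different idea is needed.

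The missing idea is a complementary-counting step in place of a direct orbit argument on Dyck blocks. One first establishes two facts for words of length $n$: (i) the major-index generating function of \emph{all} words with $t$ ones and $u$ zeros, $t,u\ge 1$, is $\equiv 0 \bmod \Phi_n(q)$, because $\gamma$-orbits have generating functions periodic on $n$ with period $d(w)$; (ii) the generating function of flat non-Dyck words with at least two ones is $\equiv 0$, by Proposition~\ref{propcycle} applied to $\delta$-orbits. Subtracting (ii) from (i) shows that the generating function of Dyck words of length $n$ with $t\ge 2$ ones is $\equiv 0$, with no orbit structure on Dyck words required. Then, instead of building orbits of full-length words, one groups the non-rigid Dyck words of length $kn$ by their maximal $n$-rigid prefix (say of length $ln$), the suffix beyond position $(l+1)n$, and the number of ones in the block $w_{ln+1}\cdots w_{(l+1)n}$; within such a group the block ranges over \emph{all} Dyck words of that content (balanced-prefix case, handled by the subtraction result) or over \emph{all} words of that content (prefix whose excess of zeros over ones is at least $n-2$, handled by (i)), and the congruence $m(w)\equiv m(w_1\cdots w_{ln})+m(w_{ln+1}\cdots w_{(l+1)n})+m(w_{(l+1)n+1}\cdots w_{kn}) \bmod n$ --- the part of your sketch that is correct --- finishes the proof. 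Your ``rotate the prefix structure'' remark for the unbalanced case remains vague, but that case is the easy one once the grouping is set up; it is the balanced-prefix case where your plan breaks and the subtraction argument is indispensable.
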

\begin{proof}
Recall that the generating function for major index of the cyclic shifts of a word $w$ of length $n$ containing at least one $1$ and one $0$ is periodic on $n$ with period $d(w)$.\footnote{This is because $m(\gamma(w))-m(w)\equiv d(w) \mod n$, as we previously noted.} By Lemma \ref{lemperiodzero}, this implies the generating function for major index of words of length $n$ containing $t$ ones and $u$ zeros where $t,u \neq 0$ is congruent to $0 \mod \Phi_n(q)$ (Observation (1)). By Proposition \ref{propcycle}, the generating function for major index of the images under iteration of $\delta$ of a flat non-Dyck word of length $n$ containing at least two ones is periodic on $n$, and thus also congruent to $0 \mod \Phi_n(q)$. Noting Observation (1), this implies that the generating function for major index of Dyck words of length $n$ containing $t$ ones and $u$ zeros where $t\ge 2$ (and thus $u\ge 2$) is congruent to $0 \mod \Phi_n(q)$ (Observation (2)).

Let $w$ be a Dyck word of length $kn$ containing $a$ ones and $b$ zeros that is not $n$-rigid. Let $l$ be the largest $l$ such that the first $ln$ letters of $w$ form an $n$-rigid word. Note that the first $ln$ letters of $w$ either have the same number of ones as zeros (first case), or have at least $n-2$ more zeros than ones (second case). 
By Observation (2), the generating function for major index of the words $w$ falling into the first case is congruent to $0 \mod \Phi_n(q)$ (because we can look at possibilities for the subword $w_{ln+1} \cdots w_{(l+1)n}$; we are implicitly using that $m(w)\equiv m(w_1\cdots w_{ln})+m(w_{ln+1}\cdots w_{(l+1)n})+m(w_{(l+1)n+1}\cdots w_{kn}) \mod n$). Similarly, by Observation (1), the generating function for major index of the words $w$ that fall into the second case is congruent to $0 \mod \Phi_n(q)$. Hence $X(a,b) \equiv Y_n(a,b)\mod \Phi_n(q)$.
\end{proof}

\begin{lem}\label{lemfilterwords2}
Fix non-negative integers $a,b,k,n$ such that $a+b=kn$ and $n>1$. Let $z$ be the number of $n$-straightened Dyck words containing exactly $a$ ones and $b$ zeros. Let $Z=z$ if $a,b \equiv 0 \mod n$; let $Z=-qz$ if $a \equiv 1 \mod n$ and $b \equiv -1 \mod n$; and let $Z=0$ otherwise. Then $Y_n(a,b) \equiv Z \mod \Phi_n(q)$. 
\end{lem}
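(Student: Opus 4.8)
The plan is to pass, modulo $\Phi_n(q)$, from the generating function $Y_n(a,b)$ over $n$-rigid Dyck words to one over $n$-straightened Dyck words by ``straightening'' each length-$n$ block, and then to collapse that sum by a parity argument. I will assume $n\ge 3$; the case $n=2$ is degenerate (rigid-type-$2$ and rigid-type-$3$ intervals then coincide, and every $2$-rigid Dyck word is already $2$-straightened, with all block major indices $0$, so $Y_2(a,b)\equiv z$) and can be checked by hand using that $-qz\equiv z \mod \Phi_2(q)$. I will repeatedly use block-additivity: if $w$ is cut into consecutive length-$n$ blocks $I_1,\dots,I_k$, every descent of $w$ is either internal to a block or occurs at a multiple of $n$, so $m(w)=\sum_j m(I_j)+n\cdot(\text{nonnegative integer})$ and hence $q^{m(w)}\equiv\prod_j q^{m(I_j)}\mod\Phi_n(q)$, where $m(I_j)$ is the major index of the $j$-th block read on its own.

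First I would introduce the map $\operatorname{str}$ that, inside each rigid-type-$2$ interval, moves the unique one to the last position, and inside each rigid-type-$3$ interval moves the unique zero to the first position, fixing all-ones and all-zeros intervals. Since rearranging letters within a block leaves every block-boundary surplus (zeros-so-far minus ones-so-far) unchanged, $\operatorname{str}(w)$ is $n$-rigid, hence $n$-straightened, with the same letter counts as $w$. Conversely, I would check that for an $n$-straightened $v$ the fiber $\operatorname{str}^{-1}(v)$ is exactly the set of words obtained by independently choosing, in each rigid-type-$2$ interval of $v$, the position of its one among $\{2,\dots,n\}$ (position $1$ is excluded precisely by the Dyck condition, since such an interval is preceded by a balance point), and in each rigid-type-$3$ interval the position of its zero among $\{1,\dots,n-1\}$; the only thing to verify is that the partial surpluses inside such a block stay $\ge 0$ under exactly these constraints. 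In particular $z$ equals the number of $n$-straightened words with $a$ ones and $b$ zeros, and $Y_n(a,b)=\sum_v\sum_{w\in\operatorname{str}^{-1}(v)}q^{m(w)}$.

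Then I would evaluate one fiber. With $s_2(v),s_3(v)$ the numbers of type-$2$, type-$3$ intervals of $v$, block-additivity and the independence just described give $\sum_{w\in\operatorname{str}^{-1}(v)}q^{m(w)}\equiv\bigl(\sum_{p=2}^{n}q^{m_2(p)}\bigr)^{s_2(v)}\bigl(\sum_{p=1}^{n-1}q^{m_3(p)}\bigr)^{s_3(v)}\mod\Phi_n(q)$, where $m_2(p)$ (resp.\ $m_3(p)$) is the standalone major index of a type-$2$ (resp.\ type-$3$) interval with its exceptional letter at position $p$. A short check gives $m_2(n)=0$, $m_2(p)=p$ for $2\le p\le n-1$, and $m_3(1)=0$, $m_3(p)=p-1$ for $2\le p\le n-1$, so the two sums are $1+q^2+\dots+q^{n-1}$ and $1+q+\dots+q^{n-2}$; subtracting each from $\frac{q^n-1}{q-1}$ and using $\Phi_n(q)\mid\frac{q^n-1}{q-1}$ (valid since $n>1$), they are $\equiv -q$ and $\equiv -q^{\,n-1}$ modulo $\Phi_n(q)$. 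Thus the fiber contributes $(-q)^{s_2(v)}(-q^{\,n-1})^{s_3(v)}=(-1)^{s_2(v)+s_3(v)}q^{\,s_2(v)+(n-1)s_3(v)}$, and since counting the ones of $v$ gives $a\equiv s_2(v)+(n-1)s_3(v)\mod n$ (the all-ones intervals contribute multiples of $n$), the exponent is $\equiv a\mod n$. Summing over $v$ and using $q^a\equiv q^{\,a\bmod n}\mod\Phi_n(q)$, I get $Y_n(a,b)\equiv q^{\,a\bmod n}\sum_v(-1)^{s_2(v)+s_3(v)}\mod\Phi_n(q)$.

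The last and most delicate step is this parity count. I would track the block-boundary surplus of an $n$-straightened $v$: it starts at $0$; an all-ones (resp.\ all-zeros) interval changes it by $-n$ (resp.\ $+n$); a type-$2$ interval forces it to be $0$ just before and $n-2$ just after; a type-$3$ interval forces it to be $n-2$ just before and $0$ just after. Hence modulo $n$ the surplus takes only the values $0$ and $-2$, flipping $0\to -2$ at each type-$2$ and $-2\to 0$ at each type-$3$ interval — and here $n\ge 3$ is used so that $0\not\equiv -2\mod n$. Therefore the type-$2$ and type-$3$ intervals of $v$ strictly alternate, beginning with a type-$2$, so $s_2(v)-s_3(v)\in\{0,1\}$; since also $s_2(v)-s_3(v)\equiv a\mod n$, there are no such $v$ (so $z=0$) unless $a\equiv 0$ or $a\equiv 1\mod n$, while if $a\equiv 0\mod n$ every $v$ has $s_2(v)=s_3(v)$ and if $a\equiv 1\mod n$ every $v$ has $s_2(v)=s_3(v)+1$. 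Feeding this in (for $a\equiv 0$: $q^{\,a\bmod n}=1$ and the sum is $z$; for $a\equiv 1$: $q^{\,a\bmod n}=q$ and the sum is $-z$; otherwise $z=0$; and $b\equiv -a\mod n$ makes the conditions on $b$ automatic) yields $Y_n(a,b)\equiv z$, $-qz$, $0$ in the three cases — exactly $Z$. I expect this surplus bookkeeping, which pins down $s_2-s_3\in\{0,1\}$, to be the main obstacle; the fibration and the cyclotomic reductions above are routine once it is in hand.
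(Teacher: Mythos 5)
Your proposal is correct and takes essentially the same route as the paper: the paper likewise fibers the $n$-rigid words over their straightened versions by ``mutating'' the exceptional letter within each rigid-type-2/3 interval, uses the block evaluations $-q$ and $-q^{n-1}$ modulo $\Phi_n(q)$, and counts the excess of type-2 over type-3 intervals. Your write-up just makes explicit some points the paper leaves implicit, namely the surplus/alternation argument forcing $s_2(v)-s_3(v)\in\{0,1\}$ and the degenerate case $n=2$.
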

\begin{proof}
It is easy to see that the generating function for major index of Dyck words of length $n$ which contain exactly one $1$ is congruent to $-q \mod \Phi_n(q)$; and similarly, the generating function for words of length $n$ which contain exactly one zero and do not end with zero is congruent to $-q^{n-1} \mod \Phi_n(q)$ (Observation (3)).  

If $a,b \equiv 0 \mod n$, then the number of rigid-type-2 intervals equals the number of rigid-type-3 intervals in each word considered by $Y_n(a,b)$. If  $a \equiv 1 \mod n$ and $b \equiv -1 \mod n$, then there is exactly one more rigid-type-2 interval than there are rigid-type-3 intervals in each word considered by $Y_n(a,b)$. In the remaining cases, there are no words considered by $Y_n(a,b)$. Note that while maintaining a $d$-rigid Dyck word's Dyckness, one can mutate any rigid-type-2 interval (resp. rigid-type-3 interval) by moving around the single $1$ (resp. the single $0$) within the interval, as long as the $1$ never takes the first position (resp. the $0$ never takes the final position) in the interval. As a consequence, we can apply Observation (3) to conclude that $Y_n(a,b) \equiv Z \mod \Phi_n(q)$.
\end{proof}

\begin{lem}\label{lemcatres}
Fix non-negative integers $j$ and $n$ such that $n>1$. Let $g$ be the remainder of $j$ modulo $n$. If $2g \geq n$ and $g\neq n-1$, then $X(j,j)\equiv 0 \mod \Phi_n(q)$. If $2g<n$, then $X(j,j)\equiv \dbinom{\frac{2j-2g}{n}}{\frac{j-g}{n}}X(g,g) \mod \Phi_n(q)$. If $g=n-1$, then $X(j,j)\equiv -q \dbinom{\frac{2j-n+2}{n}} {\frac{j+1}{n}} \mod \Phi_n(q)$.
\end{lem}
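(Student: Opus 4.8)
The plan is to prove this by direct computation from the closed form $C_j(q)=\frac{1-q}{1-q^{j+1}}{2j \brack j}$ (recall $X(j,j)=C_j(q)$), in parallel to the combinatorial route of this section. The one awkward feature of that formula is the division by $\frac{1-q^{j+1}}{1-q}$: this element is a unit modulo $\Phi_n(q)$ except exactly when $n\mid j+1$, i.e.\ when $g=n-1$, which is precisely the case with an exceptional answer. To treat all three cases uniformly I would first record the polynomial identity
$$C_j(q)={2j \brack j}-q{2j \brack j+1},$$
which follows in one line from $C_j(q)=\frac{1-q}{1-q^{j+1}}{2j \brack j}$ together with the $q$-factorial cancellation $\frac{1-q^{j+1}}{1-q}{2j \brack j+1}=\frac{1-q^{j}}{1-q}{2j \brack j}$. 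Once $C_j$ is written as a difference of $q$-binomials, it suffices to reduce ${2j \brack j}$ and ${2j \brack j+1}$ modulo $\Phi_n(q)$ — which is exactly what Lemma~\ref{lemmultiphi} delivers.

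Writing $j=mn+g$ with $0\le g<n$, I would view ${2j \brack j}={2j \brack j,j}$ and ${2j \brack j+1}={2j \brack j+1,j-1}$ and apply Lemma~\ref{lemmultiphi} with $l=2$; everything then reduces to base-$n$ digit arithmetic for $2j$, $j$, $j+1$, $j-1$ together with the convention that a $q$-multinomial vanishes when its lower parts do not sum to its upper part. If $2g<n$, all of $2j=(2m)n+2g$ and $j\pm1=mn+(g\pm1)$ are already reduced, so Lemma~\ref{lemmultiphi} gives ${2j \brack j}\equiv\binom{2m}{m}{2g \brack g}$ and ${2j \brack j+1}\equiv\binom{2m}{m}{2g \brack g+1}$ (in the subcase $g=0$ the digit of $j-1$ is $n-1$, which forces ${2j \brack j+1}\equiv 0$, consistent with ${2g \brack g+1}={0 \brack 1}=0$); feeding these into the identity gives
$$C_j(q)\equiv\binom{2m}{m}\bigl({2g \brack g}-q{2g \brack g+1}\bigr)=\binom{2m}{m}C_g(q)=\binom{2m}{m}X(g,g),$$
matching the claim since $\frac{j-g}{n}=m$ and $\frac{2j-2g}{n}=2m$. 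If $2g\ge n$ and $g\ne n-1$, then $2j=(2m+1)n+(2g-n)$ while the digits of $j$ and $j\pm1$ are $g$ and $g\pm1$, and since $g+g=(g+1)+(g-1)=2g\ne 2g-n$ the low $q$-factor of each $q$-multinomial vanishes, so ${2j \brack j}\equiv{2j \brack j+1}\equiv 0$ and hence $C_j(q)\equiv 0$. If $g=n-1$, then $2j=(2m+1)n+(n-2)$, $j=mn+(n-1)$, $j+1=(m+1)n$, $j-1=mn+(n-2)$, so ${2j \brack j}\equiv 0$ (because $(n-1)+(n-1)\ne n-2$) but ${2j \brack j+1}\equiv\binom{2m+1}{m+1}{n-2 \brack 0,n-2}=\binom{2m+1}{m+1}$, whence $C_j(q)\equiv -q\binom{2m+1}{m+1}$; and $\frac{j+1}{n}=m+1$, $\frac{2j-n+2}{n}=2m+1$ give exactly the stated formula. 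The three cases are exhaustive because $g=n-1$ always has $2g\ge n$ (using $n>1$).

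I expect the only genuinely delicate point to be the $g=n-1$ case: there $\frac{1-q^{j+1}}{1-q}$ cannot be inverted modulo $\Phi_n(q)$, so it is essential to pass to the subtraction form of $C_j$ before reducing; everything else is routine digit book-keeping plus the vanishing convention for $q$-multinomials. For completeness I would also note the more ``in-house'' proof this section is set up for — reduce $X(j,j)$ to $Y_n$-type generating functions through the $\gamma$- and $\delta$-orbit arguments behind Lemmas~\ref{lemfilterwords1} and \ref{lemfilterwords2}, then count the resulting straightened words with Lemma~\ref{lemgetT} — where the extra obstacle is that $2j$ need not be divisible by $n$, so one has to peel off a ragged final block; e.g.\ when $2g<n$ one would factor a generic $j\times j$ Dyck word as an $n$-straightened Dyck word of shape $(mn,mn)$ followed by a $g\times g$ Dyck word, which is also what makes $\binom{2m}{m}$ rather than $C_m$ appear.
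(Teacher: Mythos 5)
Your argument is correct, and it runs parallel to the paper's \emph{second} proof of Lemma \ref{lemcatres} (the generating-function one): both start from $C_j(q)=\frac{1-q}{1-q^{j+1}}{2j \brack j}$ and reduce via the $q$-Lucas congruence of Lemma \ref{lemmultiphi}. The genuine difference is how the problematic denominator is handled. The paper inverts $\frac{1-q}{1-q^{j+1}}$ modulo $\Phi_n(q)$ (leaning on the irreducibility of $\Phi_n(q)$ for the vanishing case) and, when $g=n-1$, switches to the alternative expression $C_j(q)=\frac{1-q}{1-q^{2j+1}}{2j+1 \brack j}$ together with $q^{2j+1}\equiv q^{-1}$; you instead pass once and for all to the polynomial identity $C_j(q)={2j \brack j}-q{2j \brack j+1}$ and reduce each $q$-binomial termwise, which treats all three cases by pure base-$n$ digit bookkeeping and never needs invertibility or irreducibility of $\Phi_n(q)$ -- arguably a cleaner route to the same congruences (your digit computations in all three cases check out, including the $g=0$ and $g=n-1$ edge cases, and your observation that $g=n-1$ forces $2g\geq n$ makes the case split exhaustive). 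What your write-up does not carry out is the paper's \emph{first} proof, the combinatorial one via $\delta$-orbits, $n$-rigid words, and Lemmas \ref{lemfilterwords1}--\ref{lemgetT}, which is where the bijective content (the cyclic shifting of major index) lives; you only sketch it at the end, and your sketch of the $2g<n$ factorization is consistent with how that proof actually proceeds, so nothing is wrong -- you simply trade the combinatorial insight for a shorter algebraic verification.
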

\begin{proof}
Let $X'(j,j)$ be the generating function for major index of Dyck words containing exactly $j$ ones and $j$ zeros such that the first $\lfloor \frac{2j}{n} \rfloor n$ letters form a $d$-rigid word. By Lemma \ref{lemfilterwords1}, $X(j,j)\equiv X'(j,j) \mod \Phi_n(q)$. Observe that when $2g \geq n$ and $g\neq n-1$, $X'(j,j)=0$. 

Suppose $2g<n$. For this case, observe that $X'(j,j)=Y_n(j-g,j-g)X(g,g)$. By Lemmas \ref{lemfilterwords2} and \ref{lemgetT}, this implies that $X'(j,j)\equiv \dbinom{\frac{2j-2g}{n}}{\frac{j-g}{n}}X(g,g) \mod \Phi_n(q)$.

Suppose $g=n-1$. For this case, each word considered by $X'(j,j)$ ends with $n-2$ ones. Therefore, $X'(j,j)=Y_n(j,j-n+2)$. By Lemmas \ref{lemfilterwords2} and \ref{lemgetT}, $Y_n(j,j-n+2)\equiv  -q\dbinom {\frac{2j-n+2}{n}} {\frac{j+1}{n}} \mod \Phi_n(q)$.
\end{proof}

We now provide an alternative proof of Lemma \ref{lemcatres} using the known formula for $C_n(q)$. This proof, although shorter, is less combinatorially interesting than the previous one.

\begin{proof}
It is known that $C_j(q)=\frac{1-q}{1-q^{j+1}}{2j \brack j}$. Let $g$ be the remainder of $j$ modulo $n$ and $h$ be the remainder of $2j$ modulo $n$. Recall that by the $q$-Lucas theorem (proven in \cite{Olive65}), $${2j \brack j}\equiv {\lfloor 2j/n \rfloor \choose \lfloor j/n \rfloor}{h \brack g}\mod{\Phi_n(q)}.$$

When $g>h$, ${h \brack g}=0$. Thus, when $g>h$ and $1-q^{j+1}$ is invertible modulo $\Phi_n(q)$, it follows that $C_j(q) \equiv 0 \mod \Phi_n(q)$. Since $\Phi_n(q)$ is irreducible, we conclude that when $2g \geq n$ and $g \neq n-1$, $C_j(q)\equiv 0 \mod \Phi_n(q)$.

Consider instead the case of $2g <n$. Plugging $C_j(q)$ into the $q$-Lucas theorem, and noting that $q^j \equiv q^g \mod \Phi_n(q)$, we get
$$\frac{1-q}{1-q^{j+1}} {2j \brack j}\equiv {\lfloor 2j/n \rfloor \choose \lfloor j/n \rfloor}\frac{1-q}{1-q^{g+1}}{h \brack g}={\lfloor 2j/n \rfloor \choose \lfloor j/n \rfloor}C_g(q) \mod{ \Phi_n(q)}.$$

In the remaining case, $g=n-1$. Note the identity $C_j(q)=\frac{1-q}{1-q^{2j+1}}{2j+1 \brack j}.$ Applying the $q$-Lucas Theorem, and noting that $q^{2j+1} \equiv q^{-1} \mod \Phi_n(q)$, we get that
$$\frac{1-q}{1-q^{2j+1}} {2j+1 \brack j}\equiv {\lfloor (2j+1)/n \rfloor \choose \lfloor j/n \rfloor}\frac{1-q}{1-q^{-1}}{g \brack g}\mod{\Phi_n(q)}.$$
This, in turn, simplifies to $-q{\lfloor (2j+1)/n \rfloor \choose \lfloor j/n \rfloor}.$
\end{proof}

We can now accomplish the goal of this section. We define $^jM^i_n$ to denote the number of Dyck words $w$ containing exactly $j$ ones and $j$ zeros\footnote{Note that we consider the empty word to be a word.} which satisfy $m(w) \equiv i \mod n$. 

\begin{thm}\label{thmcat}
Let $j,i,n$ be fixed non-negative integers. Let $r_d$ be the remainder of $j$ modulo $d$ for $d\mid n$. Let $C_j$ be the $j$-th Catalan number. Then,
$$^jM^i_n=\frac{1}{n}\left(C_j+\sum_{\substack{d\mid n, \\ \lfloor \frac{2j}{d} \rfloor =  \ 2 \lfloor \frac{j}{d} \rfloor, \\ d \neq 1}} {{\lfloor \frac{2j}{d} \rfloor} \choose {\lfloor \frac{j}{d} \rfloor}}\sum_{\substack{0  \le s  <d}}{^{r_d}M^s_d} c_d(i-s)-\sum_{\substack{d\mid n, \\ j\equiv  -1  \mod d, \\ d\neq 1}}{{\lfloor \frac{2j}{d} \rfloor}\choose {\lfloor \frac{j}{d}\rfloor}} c_d(i-1) \right).$$
\end{thm}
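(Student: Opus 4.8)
The plan is to apply Corollary~\ref{cormodcount} to the polynomial $a = C_j(q) = X(j,j)$, whose $n$-simplification coefficient is $\si^i_n(C_j(q)) = {}^jM^i_n$ by the definition of major index, feeding in an explicit residue $m_d$ for each $d \mid n$ drawn from Lemma~\ref{lemcatres}. First I would dispatch the divisor $d = 1$ by hand: since $\Phi_1(q) = q-1$, we may take $m_1$ to be the constant $C_j$ (the Catalan number counting the Dyck words tallied by $X(j,j)$), and as $c_1 \equiv 1$ and $\si^0_1(m_1)=C_j$, the $d=1$ term of Corollary~\ref{cormodcount} contributes exactly $\tfrac1n C_j$.

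For each remaining divisor $d > 1$ of $n$, write $g = r_d$ for the remainder of $j$ modulo $d$ and split according to the trichotomy of Lemma~\ref{lemcatres}. If $2g \ge d$ and $g \ne d-1$ we may take $m_d = 0$, so that divisor contributes nothing. If $2g < d$ --- which, since $2j = 2d\lfloor j/d\rfloor + 2g$, is exactly the condition $\lfloor 2j/d\rfloor = 2\lfloor j/d\rfloor$ --- Lemma~\ref{lemcatres} lets us take $m_d = \binom{(2j-2g)/d}{(j-g)/d}X(g,g)$; rewriting $(2j-2g)/d = \lfloor 2j/d\rfloor$ and $(j-g)/d = \lfloor j/d\rfloor$, and noting $X(g,g) = C_g(q)$ so that $\si^s_d(X(g,g)) = {}^{r_d}M^s_d$, that divisor's contribution is $\tfrac1n\binom{\lfloor 2j/d\rfloor}{\lfloor j/d\rfloor}\sum_{0\le s<d}{}^{r_d}M^s_d\, c_d(i-s)$. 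Finally, if $g = d-1$ --- i.e. $j \equiv -1 \pmod{d}$ --- Lemma~\ref{lemcatres} gives $m_d = -q\binom{(2j-d+2)/d}{(j+1)/d}$; here $(2j-d+2)/d = \lfloor 2j/d\rfloor = 2\lfloor j/d\rfloor + 1$ and $(j+1)/d = \lfloor j/d\rfloor + 1$, and since $\si^s_d(-q\cdot\text{const})$ is supported on $s \equiv 1 \pmod{d}$, that divisor contributes $-\tfrac1n\binom{(2j-d+2)/d}{(j+1)/d}\,c_d(i-1)$.

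To conclude I would note that, for $d > 1$, the three conditions ``$\lfloor 2j/d\rfloor = 2\lfloor j/d\rfloor$'', ``$j \equiv -1 \pmod{d}$'', and ``$2r_d \ge d$ with $r_d \ne d-1$'' partition the divisors of $n$ into precisely the three cases just treated (observing that $j \equiv -1 \pmod{d}$ forces $2r_d = 2d-2 \ge d$ for $d \ge 2$), and then add up all the contributions, including the $\tfrac1n C_j$ from $d=1$. The only step that is not pure bookkeeping is simplifying the binomial coefficient in the last case: because $\lfloor 2j/d\rfloor = 2\lfloor j/d\rfloor + 1$, the symmetry $\binom{m}{k} = \binom{m}{m-k}$ yields $\binom{(2j-d+2)/d}{(j+1)/d} = \binom{\lfloor 2j/d\rfloor}{\lfloor j/d\rfloor+1} = \binom{\lfloor 2j/d\rfloor}{\lfloor j/d\rfloor}$, matching the coefficient in the statement. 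Assembling the $d=1$ term, the $2r_d < d$ terms, and the $j \equiv -1 \pmod{d}$ terms then reproduces the displayed formula, and I expect no genuine obstacle beyond this bit of binomial arithmetic.
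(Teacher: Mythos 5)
Your proposal is correct and follows essentially the same route as the paper: it feeds the residues from Lemma~\ref{lemcatres} (plus $X(j,j)\equiv C_j \bmod \Phi_1(q)$ for $d=1$) into Theorem~\ref{thmmodcount}/Corollary~\ref{cormodcount}, then does the floor/binomial bookkeeping and the symmetry $\binom{\lfloor 2j/d\rfloor}{\lfloor j/d\rfloor+1}=\binom{\lfloor 2j/d\rfloor}{\lfloor j/d\rfloor}$ exactly as the paper does, only spelled out in more detail.
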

\begin{proof}
Noting that $X(j,j) \equiv C_j \mod \Phi_1(q)$, we can apply Lemma \ref{lemcatres} and Theorem \ref{thmmodcount} to get
$$^jM^i_n=\frac{1}{n}\left(C_j+\sum_{\substack{d\mid n, \\ \lfloor \frac{2j}{d} \rfloor =  2 \lfloor \frac{j}{d} \rfloor, \\ d \neq 1}}\sum_{\substack{0 \le s <d}}{{\lfloor \frac{2j}{d} \rfloor} \choose {\lfloor \frac{j}{d} \rfloor}}{^{r_d}M^s_d} c_d(i-s)-\sum_{\substack{d\mid n, \\ j\equiv -1  \mod  d, \\ d\neq 1}}{{\lfloor \frac{2j}{d} \rfloor}\choose {\lceil \frac{j}{d}\rceil}} c_d(i+n-1) \right).$$
Rearranging slightly, noting that $\displaystyle {{\lfloor \frac{2j}{d} \rfloor}\choose {\lceil \frac{j}{d}\rceil}}={{\lfloor \frac{2j}{d} \rfloor}\choose {\lfloor \frac{j}{d}\rfloor}}$, and noting that $c_d(i+n-1)=c_d(i-1)$, we get the desired formula.
\end{proof}

\begin{rem}
When $n\mid j$, Theorem \ref{thmcat} simplifies to
$^jM^i_n=\frac{1}{n}\left(C_j+\sum_{\substack{d\mid n, \\ d \neq 1}} {{2j/d} \choose {j/d}}c_d(i)\right).$
When $n\mid j-1$, Theorem \ref{thmcat} simplifies to
$^jM^i_n=\frac{1}{n}\left(C_j-\sum_{\substack{d\mid n, \\ d\neq 1}}{{\lfloor \frac{2j}{d} \rfloor}\choose {\lfloor \frac{j}{d}\rfloor}} c_d(i-1) \right).$ Furthermore, to obtain a non-recursive formula for all cases of $^jM^i_n$ for any fixed $n$, one needs only compute $n^2$ base cases.\footnote{We need only to compute the coefficients of $\si_n(C_j(q))$ for $j<n$ because for $d\mid n$, $^jM^i_d$ can be expressed as the sum of $^jM^t_n$ over $t$ such that $0\le t<n$ and $t\equiv i \mod d$.}
\end{rem}

\section{Discussion and Future Work}\label{Conclusion}

Let $f \in \mathbb{Z}[x]$. Until now, the most versatile (and commonly used) tool for finding $\si_n(f)$ has been what is sometimes referred to as the roots of unity filter \cite{Yuan09} (really an application of the discrete Fourier transform), which uses that $\si^i_n(f)=\sum\limits_{w^n=1}w^{-i}f(w).$ Evaluating $f$ at roots of unity is roughly the same problem as finding coefficients for a polynomial congruent to $f$ modulo cyclotomic polynomials (step (1)). From step (1), however, the roots of unity filter leaves users with an expression which, in practice, only simplifies nicely in the case where $f(w)$ is an integer for all $w^n=1$.\footnote{In this simple case, the order of $w$ determines $f(w)$. Thus one can use Ramanujan sums to get rid of the roots of unity and obtain the formula that would be found by applying Theorem \ref{thmmodcount}. Alternatively, as shown in \cite{Desarmenien90}, one can use Lagrange interpolation to the same end. Cohen, in \cite{Cohen55}, appears to have been the first to have noted the formula for this case.} On the other hand, Theorem \ref{thmmodcount} takes one directly from step (1) to an elegant formula. In example applications from Sections \ref{Secap} and \ref{Seccat}, this has allowed for us to solve several problems which were previously unapproachable; hopefully this trend will continue in future work.

We conclude with two directions of future work which we have found in our research.
\begin{enumerate}
\item In every congruence class modulo $\Phi_n(x)$, there exists an $n$-simplified polynomial $a \in \mathbb{Z}[x]$ with minimum $\sum\limits_i |[x^i]a|$. What can one say about such $a$? Is there a greedy algorithm to find such an $a$? When is $a$ unique? 
\item We have recently proven the following two results about $\delta$ (as defined in Section \ref{Seccat}). 
\begin{prop}
Let $w$ be a flat non-Dyck word of size $n$. Let $w'$ be the word reached from $w$ by applying $\gamma$ as many times as possible without reaching a Dyck word (we will refer to this as $j$ times), and then applying $\delta$ (which will be in shifting-case 2). Let $w''$ be $w'$ except with its first letter replaced by a zero. Let $S(w)$ be the set containing the largest $i$ such that there are $k$ more zeros than ones in the final $n-i+1$ letters of $w''$ (for each $k$ where such an $i$ exists). For a given $m$, let $\beta^{m}(w)$ denote $\gamma^m(w'')$ except with the $l$-th letter replaced by a one, where $l$ is the smallest element of
$\{\text{remainder of } h+m\text{ modulo }n\ :\ h \in S(w)\}.$  Then, $\delta^m(w)=\beta^{m-j-1}(w)$.
\end{prop}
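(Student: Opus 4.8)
The plan is to track the orbit of $w$ under $\delta$ by decomposing it into two phases. In the first phase, we are in shifting case 1 repeatedly: $\delta$ agrees with $\gamma$ for exactly $j$ steps (by definition of $j$), so $\delta^i(w) = \gamma^i(w)$ for $0 \le i \le j$, and these are all flat non-Dyck words. At step $j+1$ we enter shifting case 2 for the first time: $\gamma^{j+1}(w)$ is a Dyck word, and $\delta^{j+1}(w) = w'$ is obtained from $\gamma^{j+1}(w)$ by swapping its first letter (a zero) with its $k$-th letter (a one), where $k$ is the smallest positive index at which $\gamma^{j+1}(w)$ has balanced zeros and ones. The second phase starts from $w'$, and the claim is that from here onward, the orbit is governed entirely by cyclic shifts of the auxiliary word $w''$ together with a single controlled ``1-insertion'' whose position is dictated by the set $S(w)$.

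First I would nail down the relationship between $w'$, $w''$, and $S(w)$. Since $w'$ differs from $\gamma^{j+1}(w)$ only in the first and $k$-th positions, and $w''$ is $w'$ with its first letter forced to zero, $w''$ records the ``tail structure'' of the Dyck word $\gamma^{j+1}(w)$ after removing the initial descent. The key combinatorial observation to prove is that the positions $i$ appearing in $S(w)$ — the largest $i$ such that the final $n-i+1$ letters of $w''$ have exactly $k$ more zeros than ones, one for each attainable surplus $k$ — are precisely the positions where $\delta$ will perform its swap as we iterate. Concretely, I would show by induction on $m \ge 1$ that $\delta^{j+m}(w) = \beta^{m-1}(w) = \gamma^{m-1}(w'')$ with the $l_m$-th letter set to one, where $l_m$ is the smallest element of $\{\,h+m-1 \bmod n : h \in S(w)\,\}$. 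The base case $m=1$ asks that $w' = \beta^0(w)$, i.e.\ that $w'$ is $w''$ with a one reinserted at the smallest element of $S(w)$; this follows because the swap producing $w'$ placed a one at position $k$, and $k$ is exactly the smallest $i$ realizing surplus $1$, which is the smallest element of $S(w)$ by the surjectivity-of-$\delta$ analysis (the ``second case'' in the proof that $\delta$ is invertible already identifies this balanced-prefix position).

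For the inductive step I would apply the case analysis of Proposition~\ref{propcycle} (really its underlying bookkeeping) to $\delta^{j+m}(w)$: applying $\gamma$ cyclically shifts everything, and then either we remain non-Dyck (shifting case 1, pure shift, corresponding to $\gamma$ advancing the index by one and thus $l_m \mapsto l_m+1$, consistent with the $+m$ in the formula), or we hit a Dyck word and must swap — and the swap lands at the next balanced-prefix position, which is exactly the next element of the shifted set $\{h+m \bmod n\}$. The crux is verifying that this ``next balanced position'' after a cyclic shift is correctly computed as the minimum of the shifted set $S(w)$: this amounts to checking that the surplus profile of $w''$ is invariant under the operations performed and that cyclic shifting permutes the candidate positions in the way the formula predicts. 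I expect this bookkeeping — reconciling which element of $S(w)$ is selected after each shift, and confirming that the $1$-insertion in $\beta^m(w)$ genuinely reproduces the swap performed by $\delta$ in shifting case 2 rather than some spurious configuration — to be the main obstacle; everything else is a careful but routine unwinding of the definitions of $\gamma$, $\delta$, $w'$, $w''$, and $S(w)$.
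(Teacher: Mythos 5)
Your proposal is a plan rather than a proof, and the two places where it would have to do real work are exactly the places it gets wrong or defers. First, the base case: the case-2 swap produces $w'$ by exchanging the first letter (a zero) of the Dyck word $\gamma^{j+1}(w)$ with its $k$-th letter (a one), so the ``new'' one sits at position $1$, not at position $k$ as you assert; consequently $w'$ differs from $w''$ only in position $1$, and what must actually be proved is that $1$ is the smallest element of $S(w)$. Your justification (``$k$ is exactly the smallest $i$ realizing surplus $1$, which is the smallest element of $S(w)$'') is false on both counts: $S(w)$ records, for each surplus value, the \emph{largest} index attaining it, and its minimum corresponds to the \emph{maximal} suffix surplus. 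For instance, for $w=01100$ one gets $j=0$, $w'=10100$, $w''=00100$, and $S(w)=\{1,4,5\}$, while the smallest $i$ realizing surplus $1$ is $3\notin S(w)$. The correct argument is that $w''$ (namely $\gamma^{j+1}(w)$ with the one at its first balanced point turned into a zero) has no nonempty balanced prefix, so no proper suffix attains the full surplus and hence $1\in S(w)$; nothing in your write-up supplies this.

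Second, and more seriously, the inductive step is the entire content of the proposition, and you explicitly leave it as ``the main obstacle.'' What has to be shown is: (i) the letter of $\gamma^m(w'')$ at the selected position $l$ is a zero, else $\beta^m(w)$ does not even have the same number of ones as $w$; (ii) $\gamma(\beta^{m-1}(w))$ is Dyck precisely at those steps at which some element of the shifted set $\{h+m \bmod n : h\in S(w)\}$ wraps around to $1$; and (iii) at such a step the first balanced point of $\gamma(\beta^{m-1}(w))$ coincides with the shifted position $l_{m-1}+1$ of the tracked extra one --- only then does the case-2 swap (which puts a one at position $1$ and a zero at position $k$) collapse to ``$\gamma^m(w'')$ with a single extra one at position $1$''; if $k$ differed from the tracked position, the result would differ from $\gamma^m(w'')$ in three positions and could not be of the form $\beta^m(w)$ at all. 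Your sketch instead says ``the swap lands at the next balanced-prefix position, which is exactly the next element of the shifted set,'' which is not the right picture: the extra one jumps back to position $1$, and the elements of $S(w)$ index the \emph{times} at which shifting case 2 occurs, not landing positions. No argument linking the suffix-surplus records of $w''$ to Dyckness of these rotated, one-inserted words is given, so the proposal does not establish the proposition. (For what it is worth, the paper states this result without including a proof, so there is nothing to compare against; but on its own terms your argument has a genuine gap.)
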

\begin{cor}
Let $w$ be a flat non-Dyck word of size $n$. Then, $\delta^n(w)=w.$
\end{cor}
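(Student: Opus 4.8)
The plan is to deduce the corollary $\delta^n(w) = w$ directly from the preceding proposition, which gives an explicit formula $\delta^m(w) = \beta^{m-j-1}(w)$ valid for all $m$ (in particular for all $m \ge j+1$, and, since $\delta$ is invertible, the formula should persist for all integers $m$ once one interprets the superscripts modulo $n$). The key observation is that the right-hand side $\beta^{m-j-1}(w)$ depends on $m$ only through the quantity $m-j-1$, and moreover only through its residue modulo $n$: indeed, $\beta^{t}(w)$ is defined as $\gamma^{t}(w'')$ with a single letter altered, where the altered position is $l = $ the smallest element of $\{h+t \bmod n : h \in S(w)\}$. Since $\gamma$ has order $n$ on words of length $n$ (applying the cyclic shift $n$ times returns the original word), $\gamma^{t}(w'')$ depends only on $t \bmod n$; and the set $\{h+t \bmod n : h \in S(w)\}$ likewise depends only on $t \bmod n$, hence so does its minimum $l$. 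Therefore $\beta^{t}(w) = \beta^{t+n}(w)$ for every $t$.

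First I would record this periodicity: $\beta^{t}(w) = \beta^{t'}(w)$ whenever $t \equiv t' \pmod n$. Second, I would invoke the proposition to write, for a suitable $m$, $\delta^{m}(w) = \beta^{m-j-1}(w)$ and $\delta^{m+n}(w) = \beta^{m+n-j-1}(w)$; by the periodicity just established, the two right-hand sides are equal, so $\delta^{m+n}(w) = \delta^{m}(w)$. Third, since $\delta$ is invertible on the finite set of flat non-Dyck words of length $n$ (this was proved earlier in the section), one may cancel $\delta^{m}$ from both sides — equivalently apply $\delta^{-m}$ — to conclude $\delta^{n}(w) = w$. One must take a little care that the proposition is being applied at a value of $m$ within its stated range (it is stated for all $m$, with the understanding that $\gamma$ and the index $l$ are taken modulo $n$); choosing $m = j+1$ makes $\delta^{j+1}(w) = \beta^{0}(w)$ and $\delta^{j+1+n}(w) = \beta^{n}(w) = \beta^{0}(w)$, and then invertibility of $\delta^{j+1}$ finishes it.

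The only genuine subtlety — the step I expect to be the main obstacle — is justifying that the formula $\delta^m(w) = \beta^{m-j-1}(w)$ and the auxiliary data $w'$, $w''$, $j$, $S(w)$ are all genuinely intrinsic to $w$ and do not secretly depend on $m$, so that the substitution $m \mapsto m+n$ changes nothing except the exponent on $\gamma$ and the shift inside the $\min$. Once one reads the definitions of $w''$ and $S(w)$ carefully, this is clear: $w'$, $w''$, $j$, and $S(w)$ are fixed once $w$ is fixed, and the entire $m$-dependence of $\beta^{m-j-1}(w)$ is funneled through $\gamma^{m-j-1}$ and through the residue $(m-j-1) \bmod n$ appearing in the choice of $l$. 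Both of these are $n$-periodic in $m$, which is exactly what the argument needs. Everything else is a one-line application of invertibility of $\delta$.
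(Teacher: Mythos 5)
Your derivation is correct. The paper states this corollary in the Discussion section without any written proof (it appears immediately after the proposition as a ``recently proven'' result), so there is no argument of the paper's to compare against; your route --- observing that $\beta^{t}(w)$ depends only on $t \bmod n$ (since $\gamma^{n}$ is the identity on words of length $n$ and the index $l$ is defined through a residue modulo $n$), hence $\delta^{j+1+n}(w)=\beta^{n}(w)=\beta^{0}(w)=\delta^{j+1}(w)$, and then cancelling $\delta^{j+1}$ via the invertibility of $\delta$ proved earlier in Section 5 --- is evidently the intended deduction from the proposition. Your attention to the two genuine subtleties (that $w'$, $w''$, $j$, and $S(w)$ are intrinsic to $w$ and independent of $m$, and that both exponents $m=j+1$ and $m=j+1+n$ lie in the proposition's range of validity) is exactly what is needed, so I see no gap.
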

There are many additional questions to ask about $\delta$. For example, how many equivalence classes are there of non-Dyck words of size $n$ modulo $\delta$?
\end{enumerate}

\vspace{4cm}

\begin{center}
\textbf{Acknowledgements}
\end{center}

I would like to thank the MIT PRIMES program for providing me with the resources to conduct this research project. I thank Darij Grinberg for many useful conversations throughout the project, as well as for helping me put my thoughts on paper. His help was invaluable. I also thank MIT Prof. Richard Stanley for useful conversations about direction of research.

\newpage
\bibliographystyle{plain}
\pagestyle{empty}\singlespace
\bibliography{modulostats}

\end{document}